\def\C{\mathbb{C}}
\def\N{\mathbb{N}}
\def\M{\mathcal{M}}
\def\res{\operatorname{res}}
\def\Ann{\operatorname{Ann}}
\def\Hess{\operatorname{Hess}}
\theoremstyle{plain}
\newtheorem{thm}{Theorem}[section]
\newtheorem{lemma}[thm]{Lemma}
\newtheorem{prop}[thm]{Proposition}
\theoremstyle{definition}
\newenvironment{defn}
  {\pushQED{\qed}\defnx}
  {\popQED\enddefnx}
\newenvironment{ex}
  {\pushQED{\qed}\exx}
  {\popQED\endexx}
\theoremstyle{remark}
\newtheorem{que}[thm]{Question}
\title{On binomial complete intersections}
\author[1]{Filip Jonsson Kling\thanks{filip.jonsson.kling@math.su.se}}
\author[1]{Samuel Lundqvist\thanks{samuel@math.su.se}} 
\author[2]{Lisa Nicklasson\thanks{lisa.nicklasson@mdu.se}}
\affil[1]{\small Stockholms universitet}
\affil[2]{\small Università di Genova, Mälardalens universitet}
\date{} 
\begin{document}
\maketitle

\begin{abstract}
\noindent 
We consider homogeneous binomial ideals $I=(f_1,\ldots,f_n)$ in $K[x_1,\ldots ,x_n]$, where  
$f_i = a_i x_i^{d_i} - b_i m_i$ and $a_i \neq 0$. When such an ideal is a complete intersection, we show that the monomials which are not divisible by $x_i^{d_i}$ for $i=1,\ldots,n$ form a vector space basis for the corresponding quotient, and we describe the Macaulay dual generator in terms of a directed graph that we associate to $I$. These two properties can be seen as a natural generalization of well-known properties for monomial complete intersections. Moreover, we give a description of the radical of the resultant of $I$ in terms of the directed graph.
\vspace{20pt}

\noindent {\bf Keywords:} Complete intersection, binomial ideal, Macaulay's inverse system, resultant, term-rewriting\\
{\bf MSC 2020:} 13C40, 13E10, 13F65, 13P15, 16S15
\end{abstract}

\section*{Introduction}

An artinian monomial complete intersection $( x_1^{d_1},\ldots,x_n^{d_n} )$ in $K[x_1, \ldots, x_n]$ enjoys the following two well known properties: the set of monomials which are not divisible by $x_i^{d_i}$ for $i=1,\ldots, n$ is a vector space basis for the corresponding quotient, and the Macaulay dual generator equals $X_1^{d_1-1} \cdots X_n^{d_n-1}$ up to a constant factor.

In this paper we argue that these characteristics should be seen as special cases of properties of complete intersections on the form
$$I = ( a_1 x_1^{d_1} - b_1 m_1, \ldots, a_n x_n^{d_n} - b_n m_n ),$$ where $a_i \neq 0$, and 
$m_i$ is a monomial of degree $d_i$. Our main results are the following.

\begin{enumerate}
    \item Associated to $I$ there is a graph $G$ that gives a rewriting rule transforming any element 
    in the polynomial ring to a linear combination of monomials not divisible by $x_i^{d_i}$ for $i=1,\ldots, n$ modulo $I$. 
    In particular, the monomials not divisible by the $x_i^{d_i}$'s constitute a vector space basis for the quotient ring (Theorem \ref{thm:basis}).
      \item The coefficients of the Macaulay dual generator are monomials in the coefficients of the generators of $I$, and can be described algorithmically in terms of the graph $G$ (Theorem \ref{thm:dual}).
    \item The results above require that $I$ is a complete intersection. The radical of the resultant of $I$, which is a polynomial in the $a_i$'s and $b_i$'s, determines for which choices of coefficients $I$ really is a complete intersection. We determine this polynomial combinatorially in terms of $G$ (Theorem \ref{thm:resultant}) with the restriction that each $m_i$ is not a pure power of a variable. We also give a less precise description of the radical of  the resultant for our general class of ideals (Theorem \ref{thm:new_resultant}).

\end{enumerate}

The first and the third result generalize a construction for the case $d_1 = \cdots = d_n = 2$ by Harima, Wachi, and Watanabe \cite{HWW}, and this paper has served as the main source of inspiration for our study.


\section{A monomial basis for the quotient ring}

Let $R=K[x_1, \ldots, x_n]$ be a polynomial ring with the standard grading where $K$ is any field. Recall that homogeneous polynomials $f_1, \ldots, f_n$ form a complete intersection if they are a regular sequence, or equivalently if their only common zero over the algebraic closure $\overline{K}$ is the origin. A useful tool for illustrating conditions on when a family $f_1, \ldots, f_n$ forms a complete intersection is the \emph{resultant}. The resultant is a polynomial $\res(f_1,\dots, f_n)$ in the coefficients of $f_1,\dots, f_n$  with the property that $\res(f_1,\dots, f_n)\ne 0$ if and only if $f_1,\dots, f_n$ is a complete intersection. 

\begin{defn}\label{def:normal_form}
A set of homogeneous polynomials $\{f_1, \ldots, f_n\}$ of degrees $d_1, \ldots, d_n$ in $R$ is on \emph{normal form} if the coefficient of $x_i^{d_i}$ in $f_i$ is nonzero.
\end{defn}  

Our goal in this section is to describe the vector space structure of the quotient ring $R/(f_1, \ldots, f_n)$ when $(f_1, \ldots, f_n)$ is a complete intersection of binomials on normal form. Along the way we will also discover some properties of the resultant. Here a \emph{binomial} means a polynomial with at most two terms. Notice that we regard a monomial as a binomial.

We remark that our definition of normal form is more permissive than the one given for quadratic polynomials in \cite{HWW}, as they require $f_i$ to be a linear combination of $x_i^2$ and squarefree monomials. In the case of binomials, Definition \ref{def:normal_form} agrees with the homogeneous instance of the definition of binomials on normal form in \cite{CA}, although \cite{CA} focuses on the non-homogeneous case. 

We are interested in families of binomials on normal form, in the following sense.
 
\begin{defn}\label{def:family}
A \emph{family of binomials on normal form} is a $2n$-parameter family 
\[ \{a_1x_1^{d_1}-b_1m_1, \  \ldots,\  a_nx_n^{d_n}-b_nm_n\} \] 
of homogeneous binomials on normal form, where the monomials $m_1, \ldots, m_n$ are fixed,  $a_1, \ldots, a_n, b_1, \ldots, b_n$ are parameters, and $a_i \ne 0$ for $i=1, \ldots, n$.
\end{defn}

We remark that all our results are valid when substituting $a_i=1$, but we found that the above notation makes some of our arguments more clear.

We may assume that $m_i \ne x_i^{d_i}$ for $i=1, \ldots, n$ in Definition \ref{def:family}.
As a tool for studying families of binomials on normal form we introduce a directed graph. 

\begin{defn}\label{def:graph}
Given a family $B=\{f_1,\dots, f_n\} \subset R$ of binomials  $f_i=a_ix_i^{d_i} - b_im_i$ on normal form, and an integer $d>0$, we let the \emph{reduction graph} $G_{B,d}$ be the directed graph with vertex set
\[
\{ x_1^{\alpha_1} \cdots x_n^{\alpha_n} \ : \ \alpha_1 + \dots + \alpha_n=d\} \subset R
\]
and labeled edge set
\[
\Big\{ m \overset{i}{\to} \frac{m_im}{x_i^{d_i}} \ : \ x_i^{d_i}|m, \ \text{and} \ x_j^{d_j}\nmid m \ \text{for} \ j<i\Big\}. \qedhere
\]
\end{defn}

Observe that if $m \overset{i}{\to}m'$ is an edge in $G_{B,d}$ then 
\begin{equation*}
    a_im-\frac{m}{x_i^{d_i}}f_i=b_im'.
\end{equation*}
Hence if there is a directed path from a monomial $m$ to a monomial $m'$ in the reduction graph, then $m$ will be a scalar multiple of $m'$ modulo $(B)$.

For positive integers $d_1, \ldots, d_k$, where $k \le n$, let $\M_{d_1, \ldots, d_k}$ denote the set of monomials $x_1^{\alpha_1} \cdots x_n^{\alpha_n}$ with $\alpha_i<d_i$ for each $i\le k$. Recall that a \emph{sink} in a directed graph is a vertex with no outgoing edges.
Notice that a monomial $m$ of degree $d$ is a sink in $G_{B,d}$ if it belongs to $\M_{d_1, \ldots, d_n}$, and has precisely one outgoing edge  otherwise. In this way, the reduction graph $G_{B,d}$ gives a rewriting rule for how a monomial $m$ of degree $d$ can be represented by a constant multiple of a monomial in $\M_{d_1, \ldots, d_n}$ modulo $(B)$ if the path from $m$ ends at a sink. If the path starting at $m$ does not end, then it must reach a cycle as $G_{B,d}$ only has finitely many vertices. In that case we will show in Lemma \ref{lem:monomial} that  $m\in (B)$, when $(B)$ is a complete intersection. 

\begin{defn}
To a directed cycle $C$ in the reduction graph $G_{B,d}$, we associate the \emph{cycle polynomial} $p(C)$ in $a_1, \ldots, a_n$, $b_1, \ldots, b_n$ as
\[
p(C)=a_1^{r_1}\cdots a_n^{r_n} - b_1^{r_1} \cdots b_n^{r_n}
\]
where $r_i$ is the number of $i$-labeled edges in $C$. We then define the cycle polynomial of the whole reduction graph as
\begin{equation*}
p(G_{B,d})=\prod_{C} p(C)
\end{equation*}
where the product is taken over all directed cycles $C$ in $G_{B,d}$. 
\end{defn}

For a polynomial $p$ we let $\sqrt{p}$ denote its radical, i.\,e.\ the product of the distinct irreducible factors of $p$. 
In Lemma \ref{lem:monomial} as well as in Section \ref{sec:res} we discuss irreducible factors and divisibility of the cycle polynomial and the resultant of a family of binomials on normal form. In this context the cycle polynomial and the resultant should be considered elements of a polynomial ring $K[a_1, \ldots, a_n, b_1, \ldots, b_n]$.

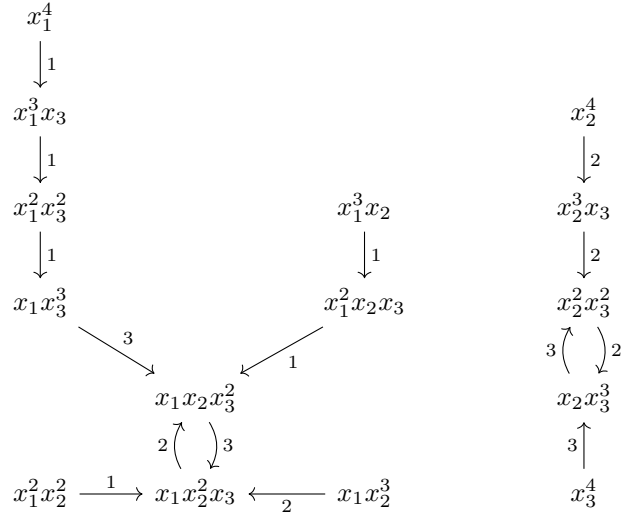
\begin{figure}[ht]
    \centering
    \[\begin{tikzcd}
x_1^4 \arrow[d, "1"]      &                                       &                             &  &                                      \\
x_1^3x_3 \arrow[d, "1"]   &                                       &                             &  & x_2^4 \arrow[d, "2"]                 \\
x_1^2x_3^2 \arrow[d, "1"] &                                       & x_1^3x_2 \arrow[d, "1"]     &  & x_2^3x_3 \arrow[d, "2"]              \\
x_1x_3^3 \arrow[rd, "3"]  &                                       & x_1^2x_2x_3 \arrow[ld, "1"] &  & x_2^2x_3^2 \arrow[d, "2", bend left] \\
                          & x_1x_2x_3^2 \arrow[d, "3", bend left] &                             &  & x_2x_3^3 \arrow[u, "3", bend left]   \\
x_1^2x_2^2 \arrow[r, "1"] & x_1x_2^2x_3 \arrow[u, "2", bend left] & x_1x_2^3 \arrow[l, "2"]     &  & x_3^4 \arrow[u, "3"]                
\end{tikzcd}\]
    \caption{$G_{B,4}$ for $B$ as in Example \ref{ex:begining_example}}
    \label{fig:graphexample}
\end{figure}

\begin{ex}\label{ex:begining_example}
Consider $f_1=a_1x_1^2 - b_1x_1x_3$, $f_2=a_2x_2^2 - b_2x_2x_3$, and $f_3=a_3x_3^2 - b_3x_2x_3$. Then $B=\{f_1,f_2,f_3\}$ is a family of binomials on normal form and the reduction graph $G_{B,4}$ is given in Figure \ref{fig:graphexample}. Here there are two cycles, one from $x_1x_2x_3^2$ to $x_1x_2^2x_3$ and back, and one from $x_2^2x_3^2$ to $x_2x_3^3$ and back, both having a $2$-labeled edge and a $3$-labeled edge, so the cycle polynomial is $p(G_{B,4})=(a_2a_3-b_2b_3)^2$ and $\sqrt{p(G_{B,4})}=a_2a_3-b_2b_3$. Note that any monomial in the graph has a path leading to a cycle. This is expected as $R/(B)$ has socle degree $3$ when $B$ is a complete intersection, so any monomial of degree $4$ should vanish.
\end{ex} 
Now we are ready to prove the key lemma of this section.
\begin{lemma}
\label{lem:monomial}
Let $B=\{f_1, \ldots, f_n\}$ be a family of binomials $f_i=a_ix_i^{d_i}-b_im_i$ on normal form, and let $d$ be a positive integer. Then 
\[
\sqrt{p(G_{B,d})}\  {\Big |} \ \res(B).
\]
Moreover, fix an integer $1 \le k \le n$. For each monomial $m \notin \M_{d_1, \ldots, d_k}$, the directed path starting at the vertex $m$ in the reduction graph either leads to
\begin{enumerate}
\item a monomial $m' \in \M_{d_1, \ldots, d_k}$, in which case $$m \equiv \frac{b_1^{r_1} \cdots b_n^{r_n}}{a_1^{r_1} \cdots a_n^{r_n}} m' \mod (f_1, \ldots, f_k)$$ 
where $r_i$ is the number of $i$-labeled edges in the directed path,
or
\item a directed cycle, and $m \in (f_1, \ldots, f_k)$ when $f_1, \ldots, f_k$ is a regular sequence. 
\end{enumerate}
\end{lemma}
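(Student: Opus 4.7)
The plan is to handle both ``moreover'' cases by deterministic path-tracing in $G_{B,d}$, and then to derive the divisibility $\sqrt{p(G_{B,d})}\mid\res(B)$ by linking the vanishing of a cycle polynomial to the existence of a nontrivial common zero of the generators.

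I would first trace the directed path $m=v_0\to v_1\to\cdots$ starting at $m$, which is well-defined since each non-sink has a unique outgoing edge. While $v_t\notin\M_{d_1,\ldots,d_k}$ some $x_i^{d_i}$ with $i\le k$ divides $v_t$, so the edge label $i_t$ is necessarily $\le k$; combined with the identity $a_{i_t}v_t-(v_t/x_{i_t}^{d_{i_t}})f_{i_t}=b_{i_t}v_{t+1}$ noted just after Definition~\ref{def:graph}, this gives $a_{i_t}v_t\equiv b_{i_t}v_{t+1}\pmod{(f_1,\ldots,f_k)}$. If the path reaches $m'\in\M_{d_1,\ldots,d_k}$, composing these congruences and dividing by the nonzero scalar $a_1^{r_1}\cdots a_n^{r_n}$ gives case~(1). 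Otherwise, since $G_{B,d}$ is finite, the path enters a cycle $C\colon w_0\to\cdots\to w_s=w_0$ whose vertices all lie outside $\M_{d_1,\ldots,d_k}$; every edge of $C$ therefore carries a label $\le k$, and composing the edge identities around $C$ produces the \emph{cycle identity} $p(C)\cdot w_0\in(f_1,\ldots,f_k)$.

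Granted that $p(C)$ specialises to a nonzero element of $K$, the cycle identity immediately gives $w_0\in(f_1,\ldots,f_k)$, and composing edge identities along the initial segment from $m$ to $w_0$ shows that $m$ is a nonzero scalar multiple of $w_0$ modulo $(f_1,\ldots,f_k)$, finishing case~(2). Both case~(2) and the divisibility claim therefore reduce to one assertion: whenever $p(C)(a^*, b^*) = 0$, the family $f_1, \ldots, f_n$ --- or, for case~(2), the sub-sequence $f_1, \ldots, f_k$ --- fails the regular sequence property at $(a^*, b^*)$; for the divisibility, the Nullstellensatz then yields $\sqrt{p(G_{B,d})}\mid\res(B)$. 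The combinatorial starting point is that $w_s = w_0$ forces the monomial identity $\prod_i m_i^{r_i} = \prod_i x_i^{d_i r_i}$; substituting the target equations $a_i y_i^{d_i} = b_i m_i(y)$ into this identity collapses it to $\prod a_i^{r_i} = \prod b_i^{r_i}$, i.e.\ exactly $p(C) = 0$. Hence $p(C) = 0$ is precisely the multiplicative consistency condition for the system $f_i(y) = 0$, and one can produce a nontrivial projective common zero by solving the induced log-linear system on the variable support of the cycle.

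The main technical obstacle is this final step: the log-linear system on the cycle's support is solvable whenever $p(C) = 0$, but extending such a torus solution to a genuine zero of \emph{all} $n$ generators (respectively of $f_1, \ldots, f_k$) requires a careful analysis of the kernel of the associated exponent matrix and a separate treatment of the generators whose indices do not appear in the cycle.
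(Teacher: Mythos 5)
Your path-tracing argument and the derivation of the cycle identity $p(C)\,w_0\in(f_1,\ldots,f_k)$ agree with the paper, and you correctly isolate the one assertion everything hinges on: a zero of $p(C)$ must force failure of the regular sequence property. The route you propose for that assertion, however, has a genuine gap in two places. First, for case~(2) with $k<n$ you plan to exhibit a nontrivial common zero of $f_1,\ldots,f_k$; this cannot establish anything, since $k<n$ homogeneous forms in $n$ variables \emph{always} have a nontrivial common zero in $\overline{K}^n$ --- witnessing non-regularity would require showing that $V(f_1,\ldots,f_k)$ has codimension strictly less than $k$, which a single point does not do. Second, even for the resultant claim ($k=n$), the step you flag as a ``technical obstacle'' is the actual content: the single relation $\prod a_i^{r_i}=\prod b_i^{r_i}$ coming from one cycle need not be the full consistency condition of the torus system $a_iy_i^{d_i}=b_im_i(y)$ on the cycle's support (the relevant exponent lattice can have further relations), and the generators $f_j$ whose labels do not occur in $C$ must also vanish at the constructed point; handling them by setting variables to zero or solving recursively produces new consistency conditions that $p(C)=0$ alone does not guarantee. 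In effect you would be proving the hard half of Theorem~\ref{thm:new_resultant} inside the lemma.

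The paper avoids all of this by a purely algebraic induction on $k$. At a point of $V(p(C))$ where every $a_i,b_i$ is nonzero (such points are dense in $V(p(C))$, as $p(C)$ is divisible by no $a_i$ or $b_i$), the cycle identity specializes to a syzygy $h_1f_1+\cdots+h_\ell f_\ell=0$ in which $h_\ell\neq 0$ is a linear combination of distinct monomials from $\M_{d_1,\ldots,d_{\ell-1}}$; the inductive hypothesis for $k=\ell-1$, combined with the Hilbert series of a complete intersection, shows $h_\ell$ is nonzero in $R/(f_1,\ldots,f_{\ell-1})$, so $f_\ell$ is a zerodivisor there and $f_1,\ldots,f_\ell$ is not regular. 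This certificate works uniformly for all $k$ and requires no construction of common zeros; to complete your proof you would need to replace the log-linear construction by this (or an equivalent) syzygy argument.
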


\begin{proof}
First, recall that if $m \overset{i}{\to}m'$ is an edge in $G_{B,d}$ then 
\begin{equation*}
    a_im-\frac{m}{x_i^{d_i}}f_i=b_im'.
\end{equation*}
Moreover, a directed path
\[
m^{(0)} \overset{i_1}{\longrightarrow} m^{(1)} \overset{i_2}{\longrightarrow} m^{(2)} \overset{i_3}{\longrightarrow}  \cdots \overset{i_r}{\longrightarrow} m^{(r)} 
\]
gives rise to the relation
\begin{align}\label{eq:path_rel}
 & a_{i_1} \cdots a_{i_r} m^{(0)} - \sum_{s=1}^r p_s \frac{m^{(s-1)}}{x_{i_s}^{d_{i_s}}}f_{i_s} = b_{i_1} \cdots b_{i_r} m^{(r)}
 \\ & \text{where} \ p_s= \prod _{\ell>s}a_{i_\ell} \prod_{\ell<s} b_{i_\ell}. \nonumber
\end{align}

We shall prove the theorem by induction over $k$. When $k=1$ it is clear that {\it 1.} holds for any monomial, by reducing modulo $f_1$. So, consider $k$ fixed, and assume {\it 1.} or {\it 2.} holds true for all monomials, for any smaller value of $k$. 
Take a monomial $m \notin \M_{d_1, \ldots, d_k}$, and consider the vertex $m$ in the reduction graph $G_{B,d}$ where $d=\deg m$. There is an outgoing edge with label $\le k$ from $m$, as otherwise $m$ would belong to the set $\M_{d_1, \ldots, d_k}$. We follow the directed path starting in $m$, as long as the edges have labels $\le k$. If this path leads to a monomial $m' \in \M_{d_1, \ldots, d_k}$ then \eqref{eq:path_rel} gives precisely the expression in {\it 1.} If not, we follow the path until we reach a monomial $m'$ that we have already seen. In this case, we have found a directed cycle $C=m' \to \dots \to m'$. By \eqref{eq:path_rel} we get
\[
a_1^{r_1} \cdots a_n^{r_n} m'-(h_1f_1 + \dots + h_kf_k)=b_1^{r_1} \cdots b_n^{r_n}m'.
\]
where $r_i$ is the number of $i$-labeled edges in $C$. Equivalently
\[
p(C)m'=h_1f_1 + \dots + h_kf_k.
\]
Here $h_i$ is linear combinations of unique monomials in $\M_{d_1, \ldots, d_{i-1}}$ coming from the vertices of $C$, and the coefficients are monomials in $a_1, \ldots, a_n, b_1, \ldots, b_n$, as described in \eqref{eq:path_rel}.

We now claim that a zero of $p(C)$ in $\overline{K}^n$ is also a zero of $\res(B)$. If the claim holds true, we can conclude that $m'\in (B)$ when $(B)$ is a regular sequence. To prove the claim, consider a point for which $p(C)=0$. As the polynomial $p(C)$ is not divisible by any $a_i$ or $b_i$, we may choose nonzero values for $a_1, \ldots, a_n, b_1, \ldots, b_n$. With this choice of point we have
\[
h_1f_1 + \dots + h_kf_k=0
\]
and no cancellation of terms in $h_1, \ldots, h_k$. Note that we do have $h_i=0$ if and only if there is no $i$-labeled edge in $C$. To produce a cycle we must have at least two different edge labels, so we have  $h_1f_1 + \dots + h_\ell f_\ell=0$ for some $1<\ell \le k$ with $h_\ell \ne 0$.
As $h_\ell$ is a linear combination of monomials in $\M_{d_1, \ldots, d_{\ell-1}}$ we have, by the inductive assumption, that $h_\ell \ne 0$ in $R/(f_1, \ldots, f_{\ell-1})$ if $f_1, \ldots, f_{\ell-1}$ is a regular sequence. But then $h_\ell$ is a zerodivisor of $f_\ell$ in   $R/(f_1, \ldots, f_{\ell-1})$, so $f_1, \ldots, f_{\ell}$ is not a regular sequence, and therefore $\res(B)=0$. 


Applying the above argument to a monomial in a cycle $C$ shows that $\sqrt{p(C)}$ divides $\res(B)$, and hence $\sqrt{p(G_{B,d})}$ divides $\res(B)$. 
\end{proof}

\begin{ex}\label{ex:path}
Consider $B=\{f_1,f_2,f_3\}$ for $f_1=a_1x_1^2 - b_1x_1x_2$, $f_2=a_2x_2^2 - b_2x_1x_3$, and $f_3=a_3x_3^2 - b_3x_1^2$. Then $B$ is a family of binomials on normal form with graph $G_{B,3}$ as given in Figure \ref{fig:graphexample_path}. Following the directed path from $x_1^2x_2$, we end up at $x_1x_2x_3\in \M_{2,2,2}$ and get that
\[
x_1^2x_2 \equiv \frac{b_1}{a_1}x_1x_2^2 \equiv \frac{b_1b_2}{a_1a_2}x_1^2x_3 \equiv \frac{b_1^2b_2}{a_1^2a_2}x_1x_2x_3 \mod (B).
\]
As every vertex in $G_{B,3}$ has a directed path leading to $x_1x_2x_3$, we can do a similar rewriting for any degree three monomial in $K[x_1,x_2,x_3]$ to show that such a monomial is equal to a multiple of $x_1x_2x_3$ modulo $(B)$, which moreover will be nonzero if all $b_i\neq 0$. This can be contrasted to Figure \ref{fig:graphexample} where the reduction graph leads every monomial of degree four to a directed cycle and to Figure \ref{fig:graphexample_deg3} where some monomials have a directed path to a monomial in $\M_{2,2,2}$ while others have a path leading to a directed cycle.
\end{ex}

\begin{figure}[ht]
    \centering
    \begin{tikzcd}
                         & x_2^2x_3 \arrow[d, "2"] &                       \\
                         & x_1x_3^2 \arrow[d, "3"] &                       \\
x_2x_3^2 \arrow[rd, "3"] & x_1^3 \arrow[d, "1"] &  \\
                         & x_1^2x_2 \arrow[d, "1"] &                       \\
                         & x_1x_2^2 \arrow[d, "2"] &   x_3^3 \arrow[ld, "3"]                    \\
x_2^3 \arrow[rd, "2"]   & x_1^2x_3 \arrow[d, "1"] &  \\
                        & x_1x_2x_3               &                      
\end{tikzcd}
    \caption{$G_{B,3}$ for $B$ as in Example \ref{ex:path}}
    \label{fig:graphexample_path}
\end{figure}
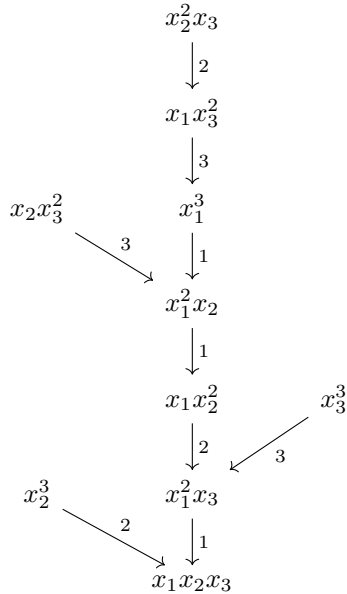

As mentioned in the introduction, the set of monomials $\M_{d_1, \ldots, d_n}$ is a vector space basis for the monomial complete intersection $R/(x_1^{d_1}, \ldots, x_n^{d_n})$, and it is proved in \cite{HWW} that the set $\M_{2, \ldots, 2}$ of squarefree monomials is a basis for $R/(f_1,\ldots, f_n)$ when $(f_1,\ldots, f_n)$ is a quadratic binomial complete intersection, where $f_i = a_i x_i^2 + b_i m_i$ with $a_i \neq 0$ and $m_i$ a squarefree monomial.  
By Lemma \ref{lem:monomial} this now generalizes to any choice of positive integers $d_1, \ldots, d_n$ as well as to our more general normal form.

\begin{thm}\label{thm:basis}
Let $B$ be a complete intersection of binomials of degrees $d_1$, ..., $d_n$ on normal form. Then $\M_{d_1, \ldots, d_n}$ is a vector space basis for $R/(B)$. 
\end{thm}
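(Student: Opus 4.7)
The plan is to prove this by combining a spanning argument (using Lemma \ref{lem:monomial}) with a dimension count from the Hilbert series of a complete intersection.

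First I would establish that $\mathcal{M}_{d_1, \ldots, d_n}$ spans $R/(B)$. It suffices to show every monomial $m$ of degree $d$ lies in the span of $\mathcal{M}_{d_1, \ldots, d_n}$ modulo $(B)$. If $m \in \mathcal{M}_{d_1, \ldots, d_n}$ there is nothing to prove. Otherwise, apply Lemma \ref{lem:monomial} with $k = n$ to the vertex $m$ in the reduction graph $G_{B,d}$. Since $B$ is a complete intersection, $f_1, \ldots, f_n$ is a regular sequence, so either the directed path from $m$ terminates at some $m' \in \mathcal{M}_{d_1, \ldots, d_n}$, in which case $m$ is congruent modulo $(B)$ to an explicit scalar multiple of $m'$, or the path reaches a cycle and $m \in (B)$, so $m \equiv 0$ modulo $(B)$. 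Either way $m$ lies in the span of $\mathcal{M}_{d_1, \ldots, d_n}$ modulo $(B)$.

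For linear independence, I would argue by a Hilbert series comparison. Since $B$ is a regular sequence of $n$ homogeneous polynomials of degrees $d_1, \ldots, d_n$ in the polynomial ring $R$ in $n$ variables, the quotient $R/(B)$ is artinian with Hilbert series
\[
\mathrm{Hilb}(R/(B), t) = \prod_{i=1}^{n} (1 + t + t^2 + \dots + t^{d_i - 1}).
\]
The coefficient of $t^d$ on the right-hand side counts exactly the monomials $x_1^{\alpha_1} \cdots x_n^{\alpha_n}$ of total degree $d$ with $\alpha_i < d_i$ for every $i$, that is, the number of elements of $\mathcal{M}_{d_1, \ldots, d_n}$ of degree $d$. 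Thus in each degree the cardinality of $\mathcal{M}_{d_1, \ldots, d_n}$ equals $\dim_K (R/(B))_d$, and a spanning set of the correct cardinality must be a basis.

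The only real delicacy is invoking the standard fact that the Hilbert series of a graded complete intersection in a polynomial ring has the product form above; everything else is a direct application of Lemma \ref{lem:monomial}. There is no new obstacle here beyond careful bookkeeping, since the heavy lifting — in particular the dichotomy between path-to-sink and path-to-cycle, and the fact that monomials falling into cycles vanish modulo $(B)$ under the regular sequence hypothesis — has already been established.
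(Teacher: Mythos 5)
Your argument is correct and follows exactly the paper's proof: spanning via Lemma \ref{lem:monomial} with $k=n$, then matching cardinalities using the standard fact that $\dim_K R/(B)$ equals $|\M_{d_1,\ldots,d_n}|$. You simply spell out the Hilbert series computation that the paper cites as ``well known.''
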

\begin{proof}
It follows from Lemma \ref{lem:monomial} with $k=n$ that any polynomial in $R/(B)$ can be expressed as a linear combination of monomials in $\M_{d_1, \ldots, d_n}$, and it is well known that the vector space dimension of $R/(B)$ is 
 $|\M_{d_1, \ldots, d_n}|$.   
\end{proof}

Recall that the most common way to determine a monomial basis for a quotient ring $R/I$ is to compute the initial ideal of $I$ w.\,r.\,t.\ some term order. The monomials outside the initial ideal then constitute a vector space basis for $R/I$. 
We remark, however, that the set of monomials in $\M_{d_1, \ldots, d_n}$ cannot in general be obtained from a Gr\"obner basis computation, or more precisely, that the monomials $x_1^{d_1},\ldots,x_n^{d_n}$ cannot in general belong to the initial ideal of $I$ with respect to a term order. Indeed, if $b_i \ne 0$ for $ i = 1, \ldots, n$, and we assume a term order for which $x_n$ is the least variable, then $x_n^{d_n}$ is the least monomial of degree $d_n$, and thus $m_n > x_n^{d_n}.$

\section{The Macaulay dual generator}

Recall that artinian complete intersections are Gorenstein, and that an artinian Gorenstein algebra can be defined in terms of a \emph{Macaulay dual generator}, unique up to a multiplicative constant. The general setup is by letting $R =K[x_1, \ldots, x_n]$ act by contraction on the polynomial ring $S=K[X_1, \ldots, X_n]$ by defining $x_i^a \circ X_i^b = X_i^{b-a}$ when $a \leq b$ and $0$ otherwise. Over a field of 
of characteristic zero, the construction is usually done by letting $R$ act by differentiation on $S$, $x_i= \frac{\partial}{\partial X_i}$. 
Throughout this section we will use the differentiation operator, but we will state the main result in terms of the contraction operator as well. Thus, we temporarily assume $K$ to be of characteristic zero.

For a polynomial $F \in S$ we define its annihilator ideal as
\[
\Ann(F)=\{ f \in R \ : \ f \circ F=0\}.
\]
Macaulay's Double annihilator Theorem states that a standard graded artinian algebra $A=\bigoplus_{i=0}^d A_i$ is Gorenstein if and only if there is a homogeneous polynomial $F \in S_d$ such that $A \cong R/\Ann(F)$. 

Given a Gorenstein algebra $R/(f_1, \ldots, f_r)$, one can determine the dual generator $F$ by solving the system of PDE's that $f_i \circ F=0$ impose. For example, this can be done in Macaulay2 \cite{M2} using the command \verb|toDual|.

As mentioned in the introduction, the monomial $X_1^{d_1-1} \cdots X_n^{d_n-1}$ is the dual generator for a monomial complete intersection $(x_1^{d_1}, \ldots, x_n^{d_n})$. In general, results providing an explicit expression for the Macaulay dual generator of a complete intersection, given a generating set on a certain form, are rare in the literature. We now aim to give such a result for our object of study.

So, given a binomial complete intersection on normal form given by $B=\{f_1, \ldots, f_n\}$ where $f_1=a_1x_1^{d_1} - b_1m_1,\dots, f_n=a_nx_n^{d_n} - b_nm_n$, with socle degree $d=d_1+\cdots + d_n - n$, we know that it has a dual generator of the form 
\[
F=\sum_{\alpha}c_{\alpha}X^{\alpha}
\]
where the sum runs over all $\alpha\in \N^n$ with $\alpha_1 + \cdots + \alpha_n=d$. What criteria on the coefficients $c_{\alpha}$ of $F$ does $f_i \circ F=0$ impose? For every monomial $X^{\gamma}$ of degree $d-d_i$ and monomial $m$ of degree $d_i$, there exists a unique monomial $X^{\alpha}$ of degree $d$ such that $m \circ X^{\alpha}=\ell X^{\gamma}$ for some nonzero $\ell \in \N$. Therefore
\[
f_i \circ F = \sum_{\gamma}(ka_ic_{\alpha} - \ell b_ic_{\beta})X^{\gamma}
\]
where the sum is taken over all $\gamma\in \N^n$ with $\gamma_1 + \cdots + \gamma_n=d-d_i$. Here $c_{\alpha}$ is the coefficient of the unique $X^{\alpha}$ satisfying that $x_i^{d_i} \circ X^{\alpha}$ is a nonzero multiple $k$ of $X^{\gamma}$, and similarly $c_{\beta}$ is the coefficient of the unique $X^{\beta}$ satisfying that $m_i \circ X^{\beta}$ is a nonzero multiple $\ell$ of $X^{\gamma}$. Let $\widehat{G}_{B,d}$ be the graph containing $G_{B,d}$ as a subgraph but where we add extra edges
\[
m \overset{i}{\to} \frac{m_im}{x_i^{d_i}}\quad \text{if} \quad x_i^{d_i}|m 
\]
and no additional restrictions on $i$. For all $f_i\circ F$ to vanish, we thus get a relation $k a_i c_{\alpha}= \ell b_i c_{\beta}$
between any two coefficients of monomials of neighbouring vertices in $\widehat{G}_{B,d}$. However, it suffices to look at the relations coming from edges only in the reduction graph $G_{B,d}$. It also gives us enough information to algorithmically determine $F$ completely.

\begin{thm}\label{thm:dual}
Let $B=\{a_1x_1^{d_1} - b_1m_1, \dots, a_nx_n^{d_n} - b_nm_n\}$ be a regular sequence of binomials, and let $d=d_1 + \cdots +d_n - n$. Then the dual generator 
$F=\sum_{\alpha}c_{\alpha}X^{\alpha}$
can be determined from the reduction graph $G_{B,d}$ as follows. Let $s_i$ be the maximal number of $i$-labeled edges in any directed path ending at $x_1^{d_1-1}\cdots x_n^{d_n-1}$.
\begin{itemize}
\item  If there is a (possibly trivial) directed path from $x^{\alpha}$ to $x_1^{d_1-1}\cdots x_n^{d_n-1}$ with $r_i$ $i$-labeled edges, then
\[
c_{\alpha} =  \left\{ \begin{array}{ll} \!\!\! \binom{d}{\alpha_1,\dots, \alpha_n}a_1^{s_1-r_1}\! \cdots a_n^{s_n-r_n}b_1^{r_1}\cdots b_n^{r_n} & \text{in the case of differentiation} \\[1ex] a_1^{s_1-r_1}\cdots a_n^{s_n-r_n}b_1^{r_1}\cdots b_n^{r_n} & \text{in the case of contraction.}  \end{array} \right.
\]
\item If there is no directed path from $x^{\alpha}$ to $x_1^{d_1-1}\cdots x_n^{d_n-1}$, then $c_{\alpha}=0$.
\end{itemize}
\end{thm}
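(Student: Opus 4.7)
The plan is to combine Macaulay duality with Theorem \ref{thm:basis} and Lemma \ref{lem:monomial}. Since $R/(B)$ is artinian Gorenstein of socle degree $d$, the dual generator $F \in S_d$ is characterised, up to a multiplicative scalar, by being nonzero and satisfying $(B) \circ F = 0$; in the contraction setup this amounts to exhibiting a nonzero linear functional $L \colon R_d \to K$ with $L|_{(B)_d} = 0$ and setting $F = \sum_\alpha L(x^\alpha)\, X^\alpha$. By Theorem \ref{thm:basis}, the only degree-$d$ element of $\M_{d_1, \ldots, d_n}$ is the sink $x^\star := x_1^{d_1 - 1} \cdots x_n^{d_n - 1}$, so $R_d / (B)_d$ is one-dimensional and such an $L$ is unique up to scalar.

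The heart of the argument is to invoke Lemma \ref{lem:monomial} with $k = n$. For each monomial $x^\alpha$ of degree $d$, the unique $G_{B,d}$-path starting at $x^\alpha$ either terminates at $x^\star$, yielding $x^\alpha \equiv \prod_j (b_j / a_j)^{r_j(\alpha)}\, x^\star \mod (B)$, or enters a directed cycle, in which case $x^\alpha \in (B)$. Setting $c_{x^\star} := \prod_j a_j^{s_j}$ (read off from the trivial path at $x^\star$, for which every $r_j = 0$), a direct bookkeeping comparison shows that in the contraction case the proposed formula reads $c_\alpha = c_{x^\star} \prod_j (b_j / a_j)^{r_j(\alpha)}$. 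This is a genuine polynomial identity rather than merely a rational one, because the defining inequality $s_j \geq r_j(\alpha)$ keeps all exponents of the $a_j$ non-negative. In the cycle case the second bullet of the theorem gives $c_\alpha = 0$, matching $x^\alpha \in (B)_d$. Consequently the functional $L(x^\alpha) := c_\alpha$ equals $c_{x^\star}$ times the projection $R_d \to R_d/(B)_d \cong K \cdot x^\star$, and in particular $L|_{(B)_d} = 0$.

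It then remains to package this back as an annihilator statement. As observed in the paragraph preceding the theorem, the coefficient of $X^\gamma$ in $f_i \circ F$ equals $L(x^\gamma f_i)$ in the contraction case, and this vanishes because $x^\gamma f_i \in (B)_d$; hence $f_i \circ F = 0$ for every $i$. Since $c_{x^\star} = \prod_j a_j^{s_j} \neq 0$ by the normal-form hypothesis, $F$ is nonzero and therefore coincides with the Macaulay dual generator of $R/(B)$ up to a scalar. The differentiation version is then obtained by multiplying each coefficient by the multinomial factor $\binom{d}{\alpha_1, \ldots, \alpha_n}$: a short computation shows that this factor exactly absorbs the factorials introduced by $x^\gamma \circ X^\alpha = \frac{\alpha!}{(\alpha - \gamma)!} X^{\alpha - \gamma}$, so that the differentiation relation $\alpha!\, a_i c_\alpha = \beta!\, b_i c_\beta$ between neighbouring coefficients collapses to the contraction relation $a_i c_\alpha = b_i c_\beta$ already verified. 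I expect the main obstacle to be simply the bookkeeping identification of the formula with $c_{x^\star}$ times the reduction scalar produced by Lemma \ref{lem:monomial}; once Lemma \ref{lem:monomial} and Theorem \ref{thm:basis} are in hand, the rest of the argument is essentially formal.
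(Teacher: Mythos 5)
Your proof is correct, but it is organized quite differently from the paper's. The paper treats the coefficients $c_\alpha$ as unknowns and solves the linear system that $f_i\circ F=0$ imposes: after renormalizing to $c_\alpha'=c_\alpha/\binom{d}{\alpha_1,\dots,\alpha_n}$, each edge of $\widehat{G}_{B,d}$ gives a relation $a_ic_\alpha'=b_ic_\beta'$; cycles force $p(C)c_\beta'=0$ and hence $c_\beta'=0$ since $p(C)\mid\res(B)\neq 0$; the sink coefficient is nonzero because $x_1^{d_1-1}\cdots x_n^{d_n-1}$ is a basis element by Theorem \ref{thm:basis}; and a final argument checks that the edges of $\widehat{G}_{B,d}$ not in $G_{B,d}$ impose no further constraints. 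You instead go bottom-up: you define $F$ outright as $c_{x^\star}$ times the linear functional dual to the class of $x^\star$ in the one-dimensional space $R_d/(B)_d$, identify its values on monomials with the stated formula via Lemma \ref{lem:monomial} (the cycle case being exactly $x^\alpha\in(B)_d$), verify $f_i\circ F=0$ from the single fact that this functional kills $(B)_d$, and conclude by uniqueness of the dual generator in top degree. Your route buys a real simplification: the consistency check against the extra edges of $\widehat{G}_{B,d}$, and the separate cycle-polynomial computation, both disappear because all the annihilation conditions collapse to vanishing on $(B)_d$. What the paper's derivation buys in exchange is an explanation of where the formula comes from (it is literally read off as the unique solution of the edge relations), which is perhaps more illuminating but logically heavier. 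Both arguments rest on the same two inputs, Lemma \ref{lem:monomial} and Theorem \ref{thm:basis}, and your passage from contraction to differentiation via the multinomial rescaling $X^\alpha\mapsto\binom{d}{\alpha_1,\dots,\alpha_n}X^\alpha$ is the standard intertwining and is handled correctly.
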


\begin{proof}
We give the proof for the differentiation action. The contraction case follows similarly. If we begin by looking at a term of $f_i \circ F$ of the form $(ka_ic_{\alpha} - \ell b_ic_{\beta})X^{\gamma}$, then $\frac{\alpha_1! \cdots \alpha_n!}{k}=\frac{\beta_1!\cdots \beta_n!}{\ell}=\gamma_1!\cdots \gamma_n!$. Introducing $c_{\alpha}'=c_{\alpha}/\binom{d}{\alpha_1,\dots,\alpha_n}$ we then get that
\begin{align*}
(ka_ic_{\alpha} - \ell b_ic_{\beta})X^{\gamma} &= (ka_i\binom{d}{\alpha_1,\dots,\alpha_n}c_{\alpha}' - \ell b_i\binom{d}{\beta_1,\dots,\beta_n}c_{\beta}')X^{\gamma} \\
&= \binom{d}{\gamma_1,\dots, \gamma_n}(a_ic_{\alpha}' - b_ic_{\beta}')X^{\gamma}.
\end{align*}
Hence the relations for all $f_i\circ F$ to vanish in terms of the $c_{\alpha}'$ becomes $a_ic_{\alpha}' = b_ic_{\beta}'$. 

Next, note that the only monomial in $\M_{d_1,\dots,d_n}$ of degree $d$ is $x_1^{d_1-1}\cdots x_n^{d_n-1}$. If there is no path from $x^{\alpha}$ to it in $G_{B,d}$, there must therefore be a path from $x^{\alpha}$ to a directed cycle $C$ starting at some $x^{\beta}$. If $r_i$ is the number of $i$-labeled edges in $C$, then going through the cycle gives that
\[
c_{\beta}' = \frac{b_1^{r_1}\cdots b_n^{r_n}}{a_1^{r_1}\cdots a_n^{r_n}}c_{\beta}'.
\]
Thus
\[
(a_1^{r_1}\cdots a_n^{r_n} - b_1^{r_1} \cdots b_n^{r_n})c_{\beta}'=p(C)c_{\beta}'=0.
\]
But from Lemma \ref{lem:monomial}, we know that $p(C)$ is a factor of $\res(B)$, which is nonzero as $B$ is a complete intersection. Hence $p(C)\neq 0$ and we must have $c_{\beta}'=0$. Finally, as there is a directed path from $x^{\alpha}$ to this $x^{\beta}$, $c_{\alpha}'$ is a multiple of $c_{\beta}'$ and is therefore also zero, showing the first part of the theorem.

For the other coefficients, we first claim that $c_{d_1-1,\dots, d_n-1}'\neq 0$. If not, then $x_1^{d_1-1}\cdots x_n^{d_n-1}$ would annihilate $F$, giving that $x_1^{d_1-1}\cdots x_n^{d_n-1}\in \Ann(F)=(B)$. But as $x_1^{d_1-1}\cdots x_n^{d_n-1}$ is in $\M_{d_1,\dots, d_n}$, it is a basis element of $R/(B)$ by Theorem \ref{thm:basis}, giving a contradiction.

The formula for the $c_{\alpha}$ where $x^{\alpha}$ has a directed path to $x_1^{d_1-1}\cdots x_n^{d_n-1}$ in $G_{B,d}$ is now coherent with the relations for all $f_i\circ F$ to vanish when we only look at edges in the reduction graph $G_{B,d}$. Further, if $a_ic_{\alpha}' = b_ic_{\beta}'$ is a relation coming from an edge of $\widehat{G}_{B,d}$ that are not in $G_{B,d}$, then this will not impose any additional criteria. Indeed, if both $c_{\alpha}'$ and $c_{\beta}'$ were nonzero, $x^{\alpha}$ and $x^{\beta}$ must both have a path to $x_1^{d_1-1}\cdots x_n^{d_n-1}$ in $G_{B,d}$, and going from $x^{\alpha}$ via this path to $x_1^{d_1-1}\cdots x_n^{d_n-1}$ and then following the other path backwards to $x^{\beta}$, gives a relation between $c_{\alpha}'$ and $c_{\beta}'$ imposed only by edges in $G_{B,d}$. This must agree with $a_ic_{\alpha}' = b_ic_{\beta}'$ as else it would force $c_{\alpha}'=0$ and therefore also $c_{d_1-1,\dots, d_n-1}'=0$, which we know is not the case. Similarly, if only one of $c_{\alpha}'$ and $c_{\beta}'$ were nonzero, that would also force $c_{d_1-1,\dots, d_n-1}'=0$. Hence it suffices to look at the edges in the reduction graph $G_{B,d}$ as desired.
\end{proof}

Notice that if we take $b_1= \dots =b_n=0$ in Theorem \ref{thm:dual} we get the well known dual generator 
$F=cX_1^{d_1-1} \cdots X_n^{d_n-1}$ of a monomial complete intersection. 


\begin{ex}
The binomials $f_1=a_1x_1^2 - b_1x_1x_3, f_2=a_2x_2^2 - b_2x_2x_3$ and $f_3=a_3x_3^2 - b_3x_2x_3$ from Example \ref{ex:begining_example} have a reduction graph $G_{B,3}$ as illustrated in Figure \ref{fig:graphexample_deg3}. Looking at the maximal paths that end at $x_1x_2x_3$, we see that an $1$-labeled edge is used at most two times and $2$ and $3$-labeled edges at most one time. Hence the coefficient of $X_1X_2X_3$ in their dual generator $F$ can be taken to be $\binom{3}{1,1,1}a_1^2a_2a_3=6a_1^2a_2a_3$ with respect to differentiation and $a_1^2a_2a_3$ with respect to contraction. As for example $x_1^2x_3$ is one $1$-labeled edge and one $3$-labeled edge away from $x_1x_2x_3$, the monomial $X_1^2X_3$ is given the coefficient $\binom{3}{2,1,0}a_1^{2-1}a_2a_3^{1-1}b_1b_3=3a_1a_2b_1b_3$ with respect to differentiation and $a_1a_2b_1b_3$ with respect to contraction, and as $x_2^3$ has a directed path leading to a cycle, the coefficient of $X_2^3$ in $F$ is zero. Continuing in this way we find 
\begin{align}\label{ex:dual_generator}
\begin{split}
F =& b_1^2a_2b_3X_1^3 + 3a_1b_1a_2b_3X_1^2X_3 + 3a_1b_1a_2a_3X_1^2X_2 \\ &\quad + 3a_1^2a_2b_3X_1X_3^2 + 3a_1^2b_2a_3X_1X_2^2+6a_1^2a_2a_3X_1X_2X_3 
\end{split}
\end{align}
with respect to differentiation and
\begin{align}\label{ex:dual_generator2}
\begin{split}
F =& b_1^2a_2b_3X_1^3 + a_1b_1a_2b_3X_1^2X_3 + a_1b_1a_2a_3X_1^2X_2 \\ &\quad + a_1^2a_2b_3X_1X_3^2 + a_1^2b_2a_3X_1X_2^2+a_1^2a_2a_3X_1X_2X_3 
\end{split}
\end{align}
with respect to contraction.

\begin{figure}[ht]
    \centering
    \[\begin{tikzcd}
	& {x_1^3} & {} & {x_2^3} \\
	& {x_1^2x_3} && {x_2^2x_3} \\
	{x_1^2x_2} & {x_1x_3^2} & {x_1x_2^2} & {x_2x_3^2} \\
	& {x_1x_2x_3} && {x_3^3}
	\arrow["1", from=1-2, to=2-2]
	\arrow["1", from=2-2, to=3-2]
	\arrow["3", from=3-2, to=4-2]
	\arrow["1"', from=3-1, to=4-2]
	\arrow["2", from=3-3, to=4-2]
	\arrow["2"', from=1-4, to=2-4]
	\arrow["3"', from=4-4, to=3-4]
	\arrow["2"', bend right, from=2-4, to=3-4]
	\arrow["3"', bend right, from=3-4, to=2-4]
\end{tikzcd}\]
    \caption{The reduction graph $G_{B,3}$ for $B$ as in Example \ref{ex:begining_example}}
    \label{fig:graphexample_deg3} 
    \vspace{-12pt}
    \qedhere
\end{figure}
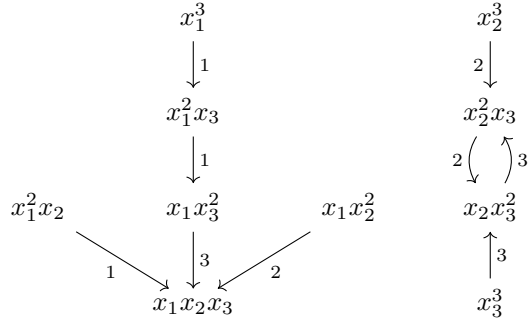
\end{ex}

If we would take the form $F$ in \eqref{ex:dual_generator}, and choose values of $a_1, a_2, a_3, b_1, b_2, b_3$ so that $\res(f_1, f_2, f_3)=0$, then we still get a Gorenstein algebra $R/\Ann(F)$ with $(f_1, f_2, f_3) \subset \Ann(F)$. In the same way, we can take any form $F$ constructed as described in Theorem \ref{thm:dual}, and consider the family of Gorenstein algebras $R/\Ann(F)$ parameterized by $a_1, \ldots, a_n, b_1, \ldots, b_n$. We finish the section by generalizing Theorem \ref{thm:basis} to this larger class of Gorenstein algebras.

\begin{prop}\label{thm:dual_sqfree_basis}
    Let $F$ be as in Theorem \ref{thm:dual}, for some given binomials $f_i=a_ix_i^{d_i}-b_im_i$. Then the set of monomials $\M_{d_1, \ldots, d_n}$ span $R/\Ann(F)$ as a vector space, independent of the choice of coefficients $a_1, \ldots, a_n, b_1, \ldots, b_n$.
\end{prop}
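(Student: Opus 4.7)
The plan is to show that every monomial of $R$ reduces modulo $\Ann(F)$ to a $K$-linear combination of monomials in $\M_{d_1,\ldots,d_n}$. Since the formula of Theorem \ref{thm:dual} was designed so that $f_i \circ F = 0$ for every $i$, we have $(f_1,\ldots,f_n) \subset \Ann(F)$, and the edge identity $a_i m - (m/x_i^{d_i})f_i = b_i m'$ used in Lemma \ref{lem:monomial} implies that any monomial $m$ of degree $e$ is congruent modulo $\Ann(F)$ to a scalar multiple of the terminal vertex of its directed path in $G_{B,e}$. If that terminal lies in $\M_{d_1,\ldots,d_n}$ we are done, and any monomial of degree greater than $d := d_1+\cdots+d_n-n$ lies in $\Ann(F)$ automatically because $F$ is of degree $d$. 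The only remaining case is that the path ends in a cycle, so it suffices to prove that every cycle vertex of every reduction graph $G_{B,e}$ belongs to $\Ann(F)$.

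For a cycle vertex $x^\mu$ with $|\mu|=d$ this is immediate: by Theorem \ref{thm:dual} its coefficient $c_\mu$ in $F$ is zero, hence $x^\mu \circ F = 0$. For a cycle vertex $x^\mu$ of degree $e < d$, unwinding the contraction or differentiation action shows that $x^\mu \circ F = 0$ if and only if $c_\nu = 0$ for every $\nu$ with $\nu \ge \mu$ componentwise and $|\nu|=d$. Invoking Theorem \ref{thm:dual} once more, this reduces to the combinatorial statement that for each such $\nu$ the directed path from $x^\nu$ in $G_{B,d}$ ends in a cycle rather than reaching the unique degree-$d$ sink $x_1^{d_1-1}\cdots x_n^{d_n-1}$.

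I would establish this combinatorial claim by maintaining, along the path of $x^\nu = x^\mu x^\beta$ in $G_{B,d}$, the invariant that the current monomial is divisible by \emph{some} cycle vertex in \emph{some} reduction graph $G_{B,e'}$ with $e' \le d$. The base case $x^\mu \mid x^\nu$ holds by assumption. A cycle vertex always has an outgoing edge, so it never lies in $\M_{d_1,\ldots,d_n}$; in particular it does not divide $x_1^{d_1-1}\cdots x_n^{d_n-1}$, and the invariant forbids the path from reaching the sink. Preservation is straightforward when the outgoing label $j$ at the current monomial $x^\gamma$ equals the outgoing label at the dividing cycle vertex $x^\sigma$: the step of $G_{B,d}$ then sends $x^\gamma$ to $x^{\sigma'}\cdot(x^\gamma/x^\sigma)$, where $x^\sigma \to x^{\sigma'}$ is the cycle edge at $x^\sigma$ in $G_{B,|\sigma|}$ and $x^{\sigma'}$ is again a cycle vertex.

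The main obstacle is the complementary case, where $j$ is strictly smaller than the outgoing label of $x^\sigma$. Then part of the $x_j^{d_j}$ removed by the step is consumed from the factor $x^\beta = x^\gamma/x^\sigma$ rather than from $x^\sigma$, and $x^\sigma$ itself can fail to divide the resulting monomial $x^{\gamma'}$. The expected remedy is to exhibit a replacement cycle vertex dividing $x^{\gamma'}$---for example the componentwise minimum of $\sigma$ with the new exponent vector when it still sits on a cycle, or a cycle vertex of different degree obtained by extending or restricting the cycle of $x^\sigma$. Verifying combinatorially that a suitable replacement always exists, by working through the cycle structure in the various graphs $G_{B,e'}$, is the technical heart of the proof.
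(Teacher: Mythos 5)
Your reduction is sound up to the final combinatorial claim: it is correct that $(B)\subseteq\Ann(F)$, that every monomial is a scalar multiple modulo $(B)$ of either a monomial in $\M_{d_1,\ldots,d_n}$ or a cycle vertex, and that a cycle vertex $x^\mu$ of degree $e$ lies in $\Ann(F)$ precisely when $c_\nu=0$ for all $\nu\geq\mu$ of top degree, which by Theorem \ref{thm:dual} amounts to showing that the directed path from every such $x^\nu$ in the top-degree reduction graph ends in a cycle. But you do not prove that last statement: you set up an invariant (``the current monomial is divisible by some cycle vertex of some $G_{B,e'}$''), verify it only in the easy case where the outgoing labels agree, and explicitly leave the hard case --- where the label at $x^\gamma$ is smaller than the label at the dividing cycle vertex, so that divisibility can be destroyed --- as ``the technical heart'' with only a guess at a remedy. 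As written this is a genuine gap, and it is not at all clear that your proposed replacements (componentwise minima, extended or restricted cycles) can be made to work.

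The paper closes exactly this gap by an algebraic detour that costs nothing: choose the coefficients $a_i,b_i$ generically. Then $B$ is a regular sequence and all $b_i\neq 0$, and Lemma \ref{lem:monomial} together with Theorem \ref{thm:basis} gives the equivalence ``the path from $m$ leads to a cycle if and only if $m\in(B)$'' (one direction is part 2 of the lemma; for the other, a path reaching a sink exhibits $m$ as a \emph{nonzero} multiple of a basis element, hence $m\notin(B)$). Since $(B)$ is an ideal, $m\in(B)$ implies $m'm\in(B)$ for every monomial $m'$, so the path from $m'm$ also leads to a cycle. Because the reduction graphs do not depend on the coefficients, this purely combinatorial conclusion then holds for every choice of $a_1,\ldots,a_n,b_1,\ldots,b_n$. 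Substituting this short argument for your unproved invariant completes your proof; I would recommend doing so rather than pursuing the direct path-chasing.
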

\begin{proof}
Let $B=\{f_1, \ldots, f_n\}$ and take some monomial $m$ of degree $d$. As the reduction graph $G_{B,d}$ is independent of the coefficients chosen, we can argue about its properties using generic choices for $a_1, \ldots, a_n, b_1, \ldots, b_n$ and then apply those properties of the graph to any choice of coefficients. By the definition of $G_{B,d}$, the directed path starting in $m$ must either
\begin{enumerate}
    \item\label{item1:proof_sqfree_gor} lead to a monomial in $\M_{d_1, \ldots, d_n}$, or
    \item\label{item2:proof_sqfree_gor} lead to a directed cycle.
\end{enumerate}
 \parbox{0.95\textwidth}{Claim: \emph{If the directed path starting in $m$ leads into a directed cycle, then the same holds for any monomial divisible by $m$, in its corresponding graph.}}
    
    To prove the claim, note that $m$ satisfying 2 is equivalent to $m \in (B)$ when the coefficients $a_1, \ldots, a_n, b_1, \ldots, b_n$ are chosen generically. Then it is clear that $m'm \in (B)$, for any monomial $m'$ and generic choice of coefficients. Equivalently, the path starting in $m'm$ leads to a directed cycle.  

Let $D=d_1+ \dots +d_n-n$, and take a monomial $m$ of degree $d \le D$ satisfying \ref{item2:proof_sqfree_gor}. For any monomial $m'$ of degree $D-d$ there will be no directed path from $m'm$ to $x_1^{d_1-1} \cdots x_n^{d_n-1}$, by the above proved claim. By Theorem \ref{thm:dual} there are then no terms divisible by $m$ in $F$, so $m \in \Ann(F)$.

Next, note that $(B) \subseteq \Ann(F)$, independently of $\res(B)$. If $m$ is a monomial satisfying \ref{item1:proof_sqfree_gor} then there is a monomial $m' \in \M_{d_1, \ldots, d_n}$ such that $m=\lambda m'$ modulo $(B)$, for some $\lambda \in \C$. Then $m=\lambda m'$ holds in $R/\Ann(F)$. 

We have now proved that any monomial is either equal to zero, or equal to a constant multiple of a monomial in $\M_{d_1, \ldots, d_n}$, in $R/\Ann(F)$.
\end{proof}

\section{The resultant}\label{sec:res}
Having established several properties for complete intersections determined by binomials on normal form, we are now interested in the criteria for when a family of binomials on that form does determine a complete intersection. To this end, we seek the factors of the resultant for a family of binomials on normal form. 

For $i=1, \ldots, n$ let $f_i= \sum_\alpha c_{i,\alpha}x^\alpha$ be homogeneous polynomials  in $R=K[x_1,\dots, x_n]$. Considering the $c_{i,\alpha}$'s as parameters, the resultant $\res(f_1, \ldots, f_n)$ is an irreducible polynomial in $K[c_{i,\alpha}  : \, i=1, \ldots, n, |\alpha|=d_i]$ which is non-zero if and only if the polynomials have a non-trivial common zero in the algebraic closure $\overline{K}$ of $K$ \cite[Theorem 1.6.1]{Dickenstein}. That is, the resultant is non-zero if and only if $(f_1, \ldots, f_n)$ is a complete intersection. 
To calculate the resultant we follow a construction for example described in \cite[Chapter 3]{using_alg_geo}. 
Let
\[
d=\sum_{i=1}^n(d_i-1) + 1
\]
and partition the set of monomials of degree $d$ into $n$ subsets $S_1,\dots, S_n$ where 
\[
S_i= \{ x^{\alpha} \ : \ \alpha_1+\cdots +\alpha_n=d, \: x_i^{d_i}|x^{\alpha}, \text{ and} \ x_j^{d_j}\nmid x^{\alpha} \ \text{for} \ j<i\}.
\]
Note that this is indeed a partition of the monomials of degree $d$ by the choice of $d$. From these we can construct a system of polynomial equations given by 
\begin{align} \label{eq:constr_res}
\frac{x^{\alpha}}{x_1^{d_1}}f_1 &= 0 \quad \text{for all } x^{\alpha} \in S_1 \nonumber \\ 
&\:\:\vdots \\ \nonumber 
\frac{x^{\alpha}}{x_n^{d_n}} f_n &= 0 \quad \text{for all } x^{\alpha} \in S_n. 
\end{align}
All of the polynomials in the system are homogeneous of degree $d$, so ordering the monomials of degree $d$ in some way, we associate a square coefficient matrix $C_n$ to it, whose determinant we denote $|C_n|$. The key property for us is that $\mathrm{res}(f_1,\dots, f_n)$ divides $|C_n|$. 
Indeed, suppose ${\bf p}$ is a zero of $\res(f_1, \ldots, f_n)$. Taking the coefficients $c_{i,\alpha}$ of $f_1, \ldots, f_n$ as prescribed by ${\bf p}$, the $f_i$'s have a nontrivial common zero $({\gamma_1, \ldots, \gamma_n}) \in \overline{K}^n$.  Evaluating \eqref{eq:constr_res} at $({\gamma_1, \ldots, \gamma_n})$  produces a non-trivial solution to the linear system of equations $C_n{\bf v}=0$ and hence $|C_n|=0$. In other words, any zero ${\bf p}$ of $\res(f_1, \ldots, f_n)$ is also a zero of the polynomial $|C_n|$. As $\res(f_1, \ldots, f_n)$ is irreducible, we conclude that it divides $|C_n|$.

Note that specializing to the binomial case means substituting a subset of the $c_{i,\alpha}$'s by 0 in the polynomials $\res(f_1, \ldots, f_n)$ and $|C_n|$. After this operation $\res(f_1, \ldots, f_n)$ is not necessarily irreducible anymore. However, the fact that the resultant divides $|C_n|$ remains true. 

The construction of all $S_i$, and thus also $C_n$, is dependent on our ordering of the variables $x_1,\dots, x_n$. In general, one would need to calculate $|C_n|$ for all $n$ cyclic permutations of the variables and then take the greatest common divisor of all of those determinants to find $\res(f_1,\dots, f_n)$. However, assuming $f_i=a_ix_i^{d_i} - b_im_i$, we will see in Theorem \ref{thm:new_resultant} that one ordering of the variables is enough to determine the radical of the resultant, up to a monomial factor in $a_1, \ldots, a_n$.

Assume again that $f_1,\dots, f_n$ are binomials on normal form,  $f_i=a_ix_i^{d_i} - b_im_i$. In this case $C_n$ is a so called matrix of almost binomial type. See for instance the matrix in Example \ref{ex:almost_binomial_matrix}.

\begin{defn}
Given variables $A=\{a_1,\dots,a_n\}$ and $B=\{b_1,\dots, b_n\}$, a square matrix $C$ is said to be of \emph{almost binomial type} if
\begin{itemize}
\item For each row of $C$, there exists an $i$ such that this row contains exactly one $a_i$ and one $-b_i$ and all other entries are zero, and
\item Every column of $C$ contains exactly one element from $A$. \qedhere
\end{itemize}
\end{defn}

Note that a matrix of almost binomial type differs from a matrix of binomial type as given in \cite[Definition A.1]{HWW} in that the same variable may occur in several different rows, and the sign of $b_i$. That $C_n$ fulfils the first criterion of a matrix of almost binomial type is clear as each $f_i$ only has two terms. For the second criterion, pick a column indexed by some monomial $x^{\alpha}$. From the constructions of the sets $S_1,\dots, S_n$, we know that there is a unique $i$ for which $x^{\alpha} \in S_i$.
Hence the only $j$ for which $a_j$ can appear in that column is $j=i$. Moreover, $a_i$ appears exactly once in the column indexed by $x^{\alpha}$, namely in the row given by the equation $(x^{\alpha}/x_i^{d_i})f_i=0$. Thus $C_n$ is a matrix of almost binomial type.

One benefit of almost binomial type matrices is that we can describe their determinants. To this end, we have the following.

\begin{defn}
A \emph{chain} appearing in $C_n$ is a sequence of entries
\begin{equation*}
a_{j_1} \to -b_{j_1} \to a_{j_2} \to -b_{j_2} \to \cdots \to a_{j_{k-1}} \to -b_{j_{k-1}} \to a_{j_k}
\end{equation*}
where $a_{j_i}$ and $-b_{j_i}$ are in the same row, and $-b_{j_i}$ and $a_{j_{i+1}}$ are in the same column. If $a_{j_k} = a_{j_1}$ are entries at the same position in $C_n$, we call the chain a \emph{circuit}. 
To each circuit we associate a polynomial
\[a_{1}^{r_1}\cdots a_{n}^{r_n} - b_{1}^{r_1}\cdots b_{n}^{r_n},\]
where $r_i$ is the number of times the variable $a_i$ appears in the circuit. 
\end{defn}

The next lemma then follows by similar results for matrices of binomial type.

\begin{lemma}\label{lemma:det_Structure}
For a matrix $C_n$ given by a family of binomials $f_i=a_ix_i^{d_i} - b_im_i$ on normal form, we have that $|C_n|$ factors into a product of a monomial in the $a_1,\dots,a_n$ and polynomials associated to circuits appearing in $C_n$.
\end{lemma}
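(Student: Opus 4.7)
The plan is to expand $|C_n|$ via the Leibniz formula and interpret each nonzero summand combinatorially using the reduction graph $G_{B,d}$. First I would note that since $S_1, \ldots, S_n$ partition the degree-$d$ monomials, the rows of $C_n$ may be indexed by those monomials: the row $r_\alpha$ coming from $x^\alpha \in S_i$ has precisely two nonzero entries, $a_i$ in the source column $x^\alpha$ and $-b_i$ in the target column $m_i x^\alpha / x_i^{d_i}$. By Definition \ref{def:graph}, this latter column is exactly the endpoint of the unique outgoing edge from $x^\alpha$ in $G_{B,d}$. Moreover, the choice $d = \sum_i (d_i - 1) + 1$ ensures that no monomial of degree $d$ lies in $\M_{d_1, \ldots, d_n}$, so every vertex of $G_{B,d}$ has exactly one outgoing edge. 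Consequently $G_{B,d}$ is a functional graph and its directed cycles are automatically pairwise vertex-disjoint.

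Next I would write $|C_n| = \sum_\sigma \operatorname{sgn}(\sigma) \prod_\alpha (C_n)_{r_\alpha, \sigma(r_\alpha)}$ and analyse which $\sigma$ give nonzero summands. Any such $\sigma$ must, for each $\alpha$, send $r_\alpha$ to either its source column (contributing $a_{i(\alpha)}$) or its target column (contributing $-b_{i(\alpha)}$), where $i(\alpha)$ is the unique index with $x^\alpha \in S_{i(\alpha)}$. Let $T_\sigma$ denote the set of $\alpha$ for which the target column is chosen. Bijectivity of $\sigma$ forces each vertex in $T_\sigma$ to be reached by an edge from $T_\sigma$; in a functional graph, iterating the successor map shows this is possible if and only if $T_\sigma$ is a union of vertex sets of directed cycles, and any such union yields a valid $\sigma$. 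This is precisely the correspondence between nonzero permutation terms of $|C_n|$ and disjoint collections of circuits of the almost binomial type matrix.

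For the sign, $\sigma$ fixes every vertex outside $T_\sigma$ and restricts to a product of cyclic successor maps on the cycles $C_{j_1}, \ldots, C_{j_k}$ covering $T_\sigma$, so $\operatorname{sgn}(\sigma) = (-1)^{|T_\sigma| - k}$. Combining this with the $(-1)^{|T_\sigma|}$ from the $-b_i$ entries yields a net factor of $(-1)^k$. Writing $A_C = \prod_{v \in C} a_{i(v)}$ and $B_C = \prod_{v \in C} b_{i(v)}$ for each directed cycle $C$, and letting $A^{*}$ denote the product of $a_{i(v)}$ over all vertices $v$ lying on no cycle, the sum collapses to
\[
|C_n| \;=\; A^{*} \sum_{S} (-1)^{|S|} \prod_{j \notin S} A_{C_j} \prod_{j \in S} B_{C_j} \;=\; A^{*} \prod_{C} \left( A_C - B_C \right),
\]
where the outer product runs over all directed cycles $C$ of $G_{B,d}$ and the inner sum over subsets $S$ of the set of cycles. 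Each factor has the form $a_1^{r_1} \cdots a_n^{r_n} - b_1^{r_1} \cdots b_n^{r_n}$ with $r_i$ the number of $i$-labeled edges of $C$, which is exactly the polynomial assigned to the corresponding circuit. The main obstacle I anticipate is the bijection between nonzero permutation terms and unions of vertex-disjoint cycles in $G_{B,d}$, together with the accompanying sign bookkeeping; once this is pinned down, the rest is a direct algebraic identity.
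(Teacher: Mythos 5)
Your proof is correct, but it takes a genuinely different route from the paper. The paper's own proof is a two-line reduction: it relabels each entry $a_i$, $-b_i$ of $C_n$ by its row index to obtain a matrix of \emph{binomial type} in the sense of \cite{HWW}, invokes Propositions A.6 and A.7 of that paper for the determinant formula, and specializes back. You instead give a self-contained argument: you observe that for $d=\sum_i(d_i-1)+1$ every degree-$d$ monomial lies outside $\M_{d_1,\ldots,d_n}$, so $G_{B,d}$ is a functional graph, which makes the nonzero terms of the Leibniz expansion biject with unions of vertex-disjoint directed cycles; the sign count $(-1)^{|T_\sigma|-k}\cdot(-1)^{|T_\sigma|}=(-1)^k$ then collapses the sum to $A^*\prod_C(A_C-B_C)$. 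All the steps check out (the identification of the target column of row $r_\alpha$ with the head of the unique outgoing edge is exactly the content of Lemma \ref{lemma:bijection}, which you establish inline rather than as a separate step). What your approach buys is independence from the appendix of \cite{HWW} and a sharper conclusion: you identify the monomial factor explicitly as $A^*=\prod_v a_{i(v)}$ over the vertices $v$ not lying on any cycle, which the lemma as stated does not claim and which in effect anticipates Lemma \ref{lemma:chainfactors}. What the paper's route buys is brevity and the reuse of a known structural result for a more general class of matrices.
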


\begin{proof}
Consider the matrix $C_n'$ obtained from $C_n$ by changing each $a_i$ and $-b_i$ in $C_n$ to $a_{i,j}$ and $b_{i,j}$ in $C_n'$ where $j$ is the row of $C_n$ it lies in. In the new variables $a_{i,j}, b_{i,j}$, the matrix $C_n'$ is of binomial type. 
An expression for $|C_n'|$ is given by \cite[Proposition A.6 and Proposition A.7]{HWW}, and specializing back $a_{i,j}$ and $b_{i,j}$ to $a_i$ and $-b_i$, we get exactly the statement.
\end{proof}

We also have an alternative description of the chains appearing in $C_n$ in terms of the reduction graph $G_{B,d}$.

\begin{lemma} \label{lemma:bijection}
Let $B=\{a_1x_1^{d_1}-b_1m_1, \  \ldots,\  a_nx_n^{d_n}-b_nm_n\}$ be a family of binomials on normal form, $C_n$ the corresponding coefficient matrix, and $G_{B,d}$ the reduction graph for $d=d_1 + \cdots + d_n - n + 1$. Then there is a chain 
\[
a_{j_1} \to -b_{j_1} \to a_{j_2}
\]
in $C_n$ if and only if there is an $j_1$-labeled edge in $G_{B,d}$ from the monomial indexing the column of $a_{j_1}$ the monomial indexing the column of $a_{j_2}$. In particular, a polynomial in $K[a_1, \ldots, a_n, b_1, \ldots, b_n]$ is associated to a circuit appearing in $C_n$ if and only if it is the cycle polynomial of a cycle in $G_{B,d}$.
\end{lemma}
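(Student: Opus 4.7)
The plan is to unpack the definitions and exhibit an explicit bijection between the rows of $C_n$ and the vertices of $G_{B,d}$, under which chains correspond to edges and circuits correspond to directed cycles. The setup I would use: since each monomial $x^\alpha$ of degree $d$ lies in exactly one of $S_1, \ldots, S_n$, the rows of $C_n$ are naturally indexed by the monomials of degree $d$, with the row coming from $(x^\alpha/x_i^{d_i})f_i=0$ (where $x^\alpha\in S_i$) identified with $x^\alpha$. The two nonzero entries of this row are then $a_i$ in column $x^\alpha$ and $-b_i$ in column $(x^\alpha/x_i^{d_i})m_i$. In particular each column $x^\beta$ contains a unique $a$-entry, namely $a_i$ where $i$ is the unique index with $x^\beta\in S_i$.

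With this setup in place, the first claim becomes almost tautological. Given a chain $a_{j_1}\to -b_{j_1}\to a_{j_2}$, let $x^\alpha$ and $x^\beta$ denote the columns of $a_{j_1}$ and $a_{j_2}$. Since $a_{j_1}$ and $-b_{j_1}$ share a row, that row must be the one for $x^\alpha\in S_{j_1}$, which immediately forces $x^\beta=(x^\alpha/x_{j_1}^{d_{j_1}})m_{j_1}$. But $x^\alpha\in S_{j_1}$ is literally the condition for an outgoing $j_1$-labeled edge from $x^\alpha$ in $G_{B,d}$, and the edge points to exactly $x^\beta$. The reverse direction runs the same reasoning backwards, and I would simply note that the chain is uniquely determined by its starting position because each row has a unique $-b$-entry and each column a unique $a$-entry.

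For the second assertion I would iterate: a circuit $a_{j_1}\to -b_{j_1}\to\cdots\to a_{j_k}$ with $a_{j_k}$ in the same position as $a_{j_1}$ traces out a sequence of columns $x^{\alpha_1},\ldots,x^{\alpha_k}=x^{\alpha_1}$ which, under the bijection above, is exactly a directed cycle in $G_{B,d}$, and conversely. The $s$-th edge of the resulting cycle carries label $j_s$, so the number of $i$-labeled edges in the cycle equals the number of times $a_i$ occurs among $a_{j_1},\ldots,a_{j_{k-1}}$, which is the exponent $r_i$ in the associated polynomial of the circuit. Hence $a_1^{r_1}\cdots a_n^{r_n}-b_1^{r_1}\cdots b_n^{r_n}$ is simultaneously the circuit polynomial and the cycle polynomial. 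The only real obstacle is the bookkeeping linking the edge labels of $G_{B,d}$ to the indices of the $a$- and $-b$-entries traversed, which just requires careful matching of the partition $\{S_i\}$ to the edge-label conditions in the definition of $G_{B,d}$.
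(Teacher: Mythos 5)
Your proposal is correct and follows essentially the same route as the paper's proof: both identify each row of $C_n$ with the monomial $x^\alpha$ determining it via the partition $S_1,\ldots,S_n$, observe that the placement of $a_{j_1}$ and $-b_{j_1}$ in that row matches exactly the $j_1$-labeled edge $x^\alpha \to (x^\alpha/x_{j_1}^{d_{j_1}})m_{j_1}$ of $G_{B,d}$, and then iterate to match circuits with cycles and their associated polynomials. Your version is, if anything, slightly more explicit about why the chain is determined by its starting entry and why the exponents $r_i$ agree, but there is no substantive difference in approach.
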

\begin{proof}
Let 
$a_{j_1} \to -b_{j_1} \to a_{j_2}$
be a chain appearing in $C_n$, and say that $a_{j_1}$ lies in a column indexed by $x^{\alpha}$. Then  the vertex $x^{\alpha}$ in $G_{B,d}$  has a unique outgoing edge  to a vertex $(x^{\alpha}/x_{i}^{d_{i}})\cdot m_{i}$ where $i$ is the smallest positive integer such that $x_i^{d_i}$ divides $x^{\alpha}$. From the construction of $C_n$, we know that the smallest such $i$ is $j_1$. Moreover, the row this $a_{j_1}$ lies in is given by the polynomial $(x^{\alpha}/x_{j_1}^{d_{j_1}})\cdot f_{j_1}$, so $(x^{\alpha}/x_{j_1}^{d_{j_1}})\cdot m_{j_1}$ will be the monomial indexing the column where $b_{j_1}$ and $a_{j_2}$ lie. Hence we conclude that the given chain appears precisely if there is an $j_1$-labeled edge in $G_{B,d}$ from the monomial indexing the column of $a_{j_1}$ to the monomial indexing the column of $a_{j_{2}}$. 

Thus there is a bijection between the chains appearing in $C_n$ and the directed paths of $G_{B,d}$ which sends a chain with $r_i$ appearances of $a_i$ to a directed path with $r_i$ edges labeled $i$. In particular, this gives a bijection between polynomials associated to circuits appearing in $C_n$ and cycle polynomials of $G_{B,d}$.
\end{proof}

\begin{lemma}
\label{lemma:chainfactors}
Let $B=\{a_1x_1^{d_1}-b_1m_1, \  \ldots,\  a_nx_n^{d_n}-b_nm_n\}$ be a family of binomials on normal form and $d=d_1 + \cdots + d_n - n + 1$. Then $a_i$ divides $|C_n|$ if and only if there is an $i$-labeled edge not included in a cycle in $G_{B,d}$.
\end{lemma}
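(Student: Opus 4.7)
The plan is to exploit the factorization of $|C_n|$ from Lemma \ref{lemma:det_Structure} together with the dictionary in Lemma \ref{lemma:bijection}, reducing the statement to a degree count in the single variable $a_i$.

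First I would write $|C_n| = a_1^{e_1} \cdots a_n^{e_n} \prod_C p(C)$, where $C$ ranges over the circuits in $C_n$ and each $p(C) = a_1^{r_{C,1}} \cdots a_n^{r_{C,n}} - b_1^{r_{C,1}} \cdots b_n^{r_{C,n}}$. A quick check shows $a_i \nmid p(C)$: if $r_{C,i} = 0$ the variable $a_i$ does not occur in $p(C)$, and if $r_{C,i} > 0$ specializing $a_i = 0$ leaves the nonzero monomial $-\prod_j b_j^{r_{C,j}}$. Hence the divisibility question reduces to whether $e_i \geq 1$.

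Next I would determine $e_i$ by comparing $a_i$-degrees. Let $N_i = |S_i|$ denote the number of rows of $C_n$ containing an $a_i$; by Lemma \ref{lemma:bijection} this is also the number of $i$-labeled edges in $G_{B,d}$. Because every column of $C_n$ contains exactly one element of $\{a_1, \ldots, a_n\}$, the assignment ``row $\mapsto$ column of its $a_j$-entry'' yields a unique permutation of $\{1,\dots,\text{size}(C_n)\}$; its signed Leibniz contribution is $\pm a_1^{N_1} \cdots a_n^{N_n}$. No other permutation can yield a pure-$a$ monomial, so this term survives without cancellation in $|C_n|$, and the $a_i$-degree of $|C_n|$ is exactly $N_i$. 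The factorization above records the same quantity as $e_i + \sum_C r_{C,i}$, forcing $e_i = N_i - \sum_C r_{C,i}$.

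Finally, Lemma \ref{lemma:bijection} identifies each circuit with a directed cycle of $G_{B,d}$ and interprets $r_{C,i}$ as the number of $i$-labeled edges lying on that cycle. Hence $\sum_C r_{C,i}$ counts the $i$-labeled edges of $G_{B,d}$ that lie on some cycle, and $e_i = N_i - \sum_C r_{C,i}$ is precisely the number of $i$-labeled edges outside every cycle. Combined with $a_i \mid |C_n| \iff e_i \geq 1$, this yields the lemma.

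The single delicate point I anticipate is justifying that the pure-$a$ monomial $a_1^{N_1} \cdots a_n^{N_n}$ really appears with nonzero coefficient in $|C_n|$, i.e.\ that exactly one permutation contributes to it in the Leibniz expansion so that no sign cancellation is possible. This is a direct consequence of the ``exactly one $a_j$ per column'' condition in the definition of an almost binomial matrix, but it is the step that must be checked carefully before the degree count can be read off.
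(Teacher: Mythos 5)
Your argument is correct, but it takes a genuinely different route from the paper. The paper proves a stronger statement for an arbitrary family of labeled graphs arising from almost binomial matrices, by induction on the number of vertices: if the graph is a disjoint union of cycles the determinant is a product of circuit polynomials (hence not divisible by any $a_i$), and otherwise one locates a source vertex $v$ whose column contains a single nonzero entry $a_i$, expands the determinant along that column to get $|M_G|=a_i|M_{G\setminus v}|$, and applies the inductive hypothesis. You instead read off the exponent $e_i$ of $a_i$ in the factorization of Lemma \ref{lemma:det_Structure} by a degree count: the $a_i$-degree of $|C_n|$ equals the number $N_i=|S_i|$ of $i$-labeled edges, because the unique permutation selecting the $a$-entry of every row contributes the monomial $\pm a_1^{N_1}\cdots a_n^{N_n}$ with no possible cancellation, while every other Leibniz term has strictly smaller $a$-support; subtracting $\deg_{a_i}$ of the circuit factors then gives $e_i = N_i - \sum_C r_{C,i}$. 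This is a clean global argument that avoids the induction entirely, and the ``delicate point'' you flag is handled correctly. Two implicit ingredients should be made explicit, though both are true. First, you use Lemma \ref{lemma:det_Structure} in the sharp form $|C_n|=a_1^{e_1}\cdots a_n^{e_n}\prod_C p(C)$ with the product running over \emph{all} circuits, each exactly once; the lemma as stated only asserts a factorization into a monomial and \emph{some} circuit polynomials, so you need the precise form of the underlying result from \cite{HWW} (which does deliver this, since every vertex of $G_{B,d}$ has out-degree at most one, so the graph is a disjoint union of trees hanging off cycles). Second, your identification of $\sum_C r_{C,i}$ with the number of $i$-labeled edges lying on cycles silently uses that distinct directed cycles of $G_{B,d}$ are edge-disjoint; this again follows from the out-degree condition, but it is worth a sentence. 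With those two points filled in, your proof is a valid and somewhat shorter alternative to the paper's induction.
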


\begin{proof}
By Lemma \ref{lemma:bijection}, the graph $G_{B,d}$ is completely determined by the matrix $C_n$ with its labeled columns. Let $F_N$ be the family of directed labeled graphs on $N$ vertices and $N$ edges determined by some almost binomial matrix in the same way. That is, for each $G\in F_N$, there is an almost binomial matrix $M_G$ with columns labeled by the vertices of $G$ with the property that there is an edge $v \overset{i}{\to}w$ in $G$ if and only if there is a chain 
$
a_i \to -b_i \to a_j
$
in $M_G$ where $a_i$ lies in a column indexed by $v$ and $a_j$ in a column indexed by $w$.

We will now prove the stronger statement that if $G \in F_N$ then $a_i$ divides $|M_G|$ if and only if there is an $i$-labeled edge not included in a cycle in $G$. The proof is carried out by induction over $N$, starting at $N=2$.

For $N=2$, there are only two possible matrices of almost binomial type, namely
\[
M_1=\begin{pmatrix}
a_1 & -b_1 \\
-b_1 & a_1
\end{pmatrix} \quad \text{and} \quad
M_2=\begin{pmatrix}
a_1 & -b_1 \\
-b_2 & a_2
\end{pmatrix}.
\]
Both $M_1$ and $M_2$ describe a graph with two vertices having a directed edge from one to the other. In the case of $M_1$, both edges have the same label, while in the case of $M_2$, the labels are different. In any case, there is no edge not included in a cycle. As neither $|M_1|=a_1^2 - b_1^2$ nor $|M_2|=a_1a_2-b_1b_2$ is divisible by $a_1$ or $a_2$, we see that the statement holds for $N=2$.

Assume now that the statement is true for all graphs in $F_{N-1}$ and pick a graph $G\in F_N$. If $G$ is a disjoint union of cycles, then they all correspond to circuits in $|M_G|$ by Lemma \ref{lemma:bijection}, and $|M_G|$ will be a product of the circuit polynomials associated to those circuits. From the definition of a circuit polynomial we know that those are never divisible by any $a_i$.

Next, assume $G$ does have a directed edge outside a cycle. As $G$ only has finitely many vertices, if we follow that edge backwards in some way, we must be able to find a source vertex $v$ of $G$ with no incoming edge and say an $i$-labeled outgoing edge. This means that in the column indexed by $v$ in $M_G$, there is a single nonzero entry $a_i$. Indeed, if there were a $b_j$ in that column, then we could produce a chain $
a_j \to -b_j \to a_i$
in $M_G$, forcing an edge $w \overset{j}{\to}v$ to exist, for some $w$. Now, expanding along the column indexed by $v$, with the only nonzero entry $a_i$,
the matrix we are left with corresponds to the the graph $G \setminus v$ in $F_{N-1}$ where $v$ and its outgoing edge has been removed. That is, $|M_G|=a_i|M_{G\setminus v}|$. By induction, we know that $|M_{G\setminus v}|$ is divisible by an $a_j$ if and only if there is a $j$-labeled edge not in a cycle of $G\setminus v$. Since the edges outside a cycle of $G\setminus v$ together with an extra $i$-labeled edge is the same as the edges outside a cycle of $G$, the statement is true for $G$ and hence for any graph of this form by induction.
\end{proof}

We can now largely describe the factors of the resultant of a family of binomials on normal form.

\begin{thm}\label{thm:new_resultant}
Let $B=\{a_1x_1^{d_1}-b_1m_1, \  \ldots,\  a_nx_n^{d_n}-b_nm_n\} \subset R$ be a family of binomials on normal form and let $d=d_1 + \dots + d_n-n+1$. Then  
\[
\sqrt{\res(B)} = a_1^{t_1} \cdots a_n^{t_n} \sqrt{p(G_{B,d})}
\]  
where $t_i=0$ or $1$, and $t_i=0$ if each $i$-labeled edge is part of a cycle in $G_{B,d}$.
\end{thm}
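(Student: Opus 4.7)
The plan is to sandwich $\sqrt{\res(B)}$ between a lower bound and an upper bound that already have the shape described by the theorem. For the lower bound, Lemma \ref{lem:monomial} gives $\sqrt{p(G_{B,d})} \mid \res(B)$, and hence $\sqrt{p(G_{B,d})} \mid \sqrt{\res(B)}$. For the upper bound, I will use the matrix $C_n$ built in this section: since $\res(B) \mid |C_n|$, we have $\sqrt{\res(B)} \mid \sqrt{|C_n|}$, so the task reduces to computing $\sqrt{|C_n|}$ explicitly and comparing it with $\sqrt{p(G_{B,d})}$.

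To compute $\sqrt{|C_n|}$, I would assemble the three lemmas just proved: Lemma \ref{lemma:det_Structure} factors $|C_n|$ as a monomial in $a_1,\ldots,a_n$ times a product of circuit polynomials, Lemma \ref{lemma:bijection} identifies those circuit polynomials with the cycle polynomials $p(C)$ of $G_{B,d}$, and Lemma \ref{lemma:chainfactors} determines exactly which $a_i$'s appear in the monomial factor, namely those labels $i$ for which $G_{B,d}$ contains an $i$-labeled edge not lying on any cycle. The key observation is that each cycle polynomial $a_1^{r_1}\cdots a_n^{r_n} - b_1^{r_1}\cdots b_n^{r_n}$ has the pure $b$-term $-b_1^{r_1}\cdots b_n^{r_n}$, so no $a_i$ divides any circuit polynomial, hence the monomial factor and the cycle polynomials share no irreducible factors. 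Passing to the radical therefore gives
\[
\sqrt{|C_n|} = \Bigl(\prod_{i \in S} a_i\Bigr)\, \sqrt{p(G_{B,d})},
\]
where $S = \{\,i : G_{B,d}\text{ has an }i\text{-labeled edge outside every cycle}\,\}$.

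Putting the bounds together, $\sqrt{\res(B)}$ is a squarefree polynomial divisible by $\sqrt{p(G_{B,d})}$ and dividing $\bigl(\prod_{i\in S} a_i\bigr)\sqrt{p(G_{B,d})}$. Consequently the quotient $\sqrt{\res(B)}/\sqrt{p(G_{B,d})}$ is a squarefree divisor of $\prod_{i \in S}a_i$, and therefore equals $\prod_{i \in T}a_i$ for some subset $T \subseteq S$. Setting $t_i=1$ if $i\in T$ and $t_i=0$ otherwise yields the asserted formula, with $t_i=0$ whenever $i\notin S$, i.e.\ whenever every $i$-labeled edge of $G_{B,d}$ belongs to a cycle.

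The only subtlety I foresee is verifying the coprimality of the monomial part and the cycle polynomials in $K[a_1,\ldots,a_n,b_1,\ldots,b_n]$, which is handled by the pure-$b$ term observation above; once this is in place, the proof is a short bookkeeping argument stitching together Lemmas \ref{lem:monomial}, \ref{lemma:det_Structure}, \ref{lemma:bijection}, and \ref{lemma:chainfactors}. I do not expect a genuine obstacle.
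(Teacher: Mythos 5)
Your argument is correct and follows essentially the same route as the paper: sandwich $\sqrt{\res(B)}$ between $\sqrt{p(G_{B,d})}$ (Lemma \ref{lem:monomial}) from below and $a_1^{s_1}\cdots a_n^{s_n}\sqrt{p(G_{B,d})}$ from above via $\res(B)\mid |C_n|$ and Lemmas \ref{lemma:det_Structure}, \ref{lemma:bijection}, \ref{lemma:chainfactors}. The only difference is that you assert equality for $\sqrt{|C_n|}$ where the paper only needs (and only claims) divisibility, but since your final step uses just the divisibility direction, this is harmless.
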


\begin{proof}
We will show the desired equality using the aid of $|C_n|$. First, as mentioned earlier $\res(B)$ divides $|C_n|$, so
any irreducible factor of $\res(B)$ must occur in $|C_n|$.
Lemma \ref{lemma:det_Structure} describes the possible factors of $|C_n|$. Together with Lemma \ref{lemma:bijection} and Lemma \ref{lemma:chainfactors} we get that
\[
\sqrt{|C_n|} \ \Big{|} \  a_1^{s_1} \cdots a_n^{s_n} \sqrt{p(G_{B,d})}
\]
where $s_i=0$ if each $i$-labeled edge is part of a cycle in $G_{B,d}$ and $s_i=1$ otherwise. In particular $\sqrt{\res(B)}$ divides $a_1^{s_1} \cdots a_n^{s_n} \sqrt{p(G_{B,d})}$. By Lemma \ref{lem:monomial} we know that $\sqrt{p(G_{B,d})}$ divides $\res(B)$, so it follows that 
\[
\sqrt{\res(B)} = a_1^{t_1} \cdots a_n^{t_n} \sqrt{p(G_{B,d})}
\]
for some non-negative integers $t_i \le s_i$, which proves the statement.
\end{proof}

For a large class of families of binomials on normal form, we can give a precise description of the exponents $t_1, \ldots, t_n$ in Theorem \ref{thm:new_resultant}.

\begin{thm}\label{thm:resultant}
Let $B=\{a_1x_1^{d_1}-b_1m_1, \  \ldots,\  a_nx_n^{d_n}-b_nm_n\} \subset R$ be a family of binomials on normal form and let $d=d_1 + \dots + d_n-n+1$. If no $m_k$ is a pure power of $x_i$ then $a_i$ divides $\res(B)$. In particular, if no $m_k$ is a pure power of a variable, then  
\[
\sqrt{\res(B)} = \sqrt{|C_n|} = a_1 \cdots a_n \sqrt{p(G_{B,d})}.
\]  
\end{thm}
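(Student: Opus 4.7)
The plan is to establish the main divisibility $a_i \mid \res(B)$ directly by producing an explicit non-trivial common zero of the system after specializing $a_i$ to zero, and then to deduce the ``in particular'' equalities by matching this lower bound against the upper bound of Theorem \ref{thm:new_resultant} together with the structural description of $|C_n|$ provided by Lemmas \ref{lemma:det_Structure}, \ref{lemma:bijection}, and \ref{lemma:chainfactors}.

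Fix an index $i$ and assume no $m_k$ is a pure power of $x_i$. I would consider the point $P \in \overline{K}^n$ whose $i$-th coordinate is $1$ and whose other coordinates are $0$, and check that $P$ is a common zero of the system obtained by setting $a_i = 0$, independently of the values of the remaining parameters. For each $k \ne i$, the monomial $x_k^{d_k}$ vanishes at $P$ since $x_k = 0$, and $m_k$ vanishes at $P$ because by hypothesis $m_k$ is divisible by some $x_j$ with $j \ne i$. For $k = i$ the specialized polynomial becomes $-b_i m_i$, and $m_i$ vanishes at $P$ by the same reasoning. Hence the specialized system has a non-trivial common zero for every choice of the remaining parameters, which forces $\res(B)|_{a_i = 0}$ to be the zero polynomial, giving $a_i \mid \res(B)$.

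For the ``in particular'' statement, assume no $m_k$ is a pure power of any variable. Applying the previous step to every $i$ gives $a_1 \cdots a_n \mid \res(B)$, and Lemma \ref{lem:monomial} gives $\sqrt{p(G_{B,d})} \mid \res(B)$. The $a_i$ do not appear in any cycle polynomial $p(C) = a_1^{r_1} \cdots a_n^{r_n} - b_1^{r_1} \cdots b_n^{r_n}$, so these factors are pairwise coprime, and combining them yields $a_1 \cdots a_n \sqrt{p(G_{B,d})} \mid \sqrt{\res(B)}$. Matching this against the upper bound $\sqrt{\res(B)} = a_1^{t_1} \cdots a_n^{t_n} \sqrt{p(G_{B,d})}$ with $t_i \in \{0,1\}$ from Theorem \ref{thm:new_resultant} forces $t_i = 1$ for every $i$, giving the first equality. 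The equality $\sqrt{|C_n|} = a_1 \cdots a_n \sqrt{p(G_{B,d})}$ follows by the same bookkeeping one level up: Lemmas \ref{lemma:det_Structure} and \ref{lemma:bijection} exhibit $\sqrt{|C_n|}$ as a product of $\sqrt{p(G_{B,d})}$ with a subset of the $a_i$'s, and the chain $a_i \mid \res(B) \mid |C_n|$ ensures every $a_i$ is present.

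I do not foresee a serious technical obstacle; the substantive step is guessing the right candidate zero $P$, and the hypothesis ``no $m_k$ is a pure power of $x_i$'' is exactly what is needed to kill every $m_k$ at $P$ simultaneously (including $m_i$, which carries the specialized $f_i$ after $a_i$ is set to zero). The remainder is routine divisibility bookkeeping between the lower bound from $P$ together with Lemma \ref{lem:monomial} and the upper bound from Theorem \ref{thm:new_resultant}.
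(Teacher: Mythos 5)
Your proposal is correct and follows essentially the same route as the paper: the paper also specializes $a_i=0$ and exhibits the standard basis vector $e_i$ (your point $P$) as a non-trivial common zero, using the hypothesis that no $m_k$ is a pure power of $x_i$, and then pins down the equalities by comparing this lower bound with the upper bound $a_1^{s_1}\cdots a_n^{s_n}\sqrt{p(G_{B,d})}$ from the proof of Theorem \ref{thm:new_resultant}. Your write-up is in fact slightly more explicit than the paper's about why $\res(B)|_{a_i=0}$ vanishes identically and about the coprimality of the $a_i$ with the cycle polynomials, but the substance is identical.
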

\begin{proof}
Assume that no $m_k$ is a pure power of $x_i$, for a fixed $i$. If $a_i=0$, then the point $e_i\in K^n$ with a $1$ in position $i$ and zeroes elsewhere will be a non-trivial common zero of $f_1,\dots, f_n$. Hence they will not be a complete intersection in that case and thus $a_i$ must divide $\res(B)$. Moreover, if no $m_k$ is a pure power of a variable, then all $a_i$'s are factors of $\res(B)$, and hence $\sqrt{\res(B)} =a_1 \cdots a_n \sqrt{p(G_{B,d})}$. By the same argument as in the proof of Theorem \ref{thm:new_resultant} $\sqrt{\res(B)}$ divides $\sqrt{|C_n|}$ who divides $a_1^{s_1} \cdots a_n^{s_n} \sqrt{p(G_{B,d})}$ for some $s_i$'s in $\{0,1\}$. The only possibility is $s_1= \cdots = s_n=1$, forcing the desired equality.
\end{proof}

At a first glance, it might appear that Theorem \ref{thm:resultant} can contradict the $t_i=0$ criterion from Theorem \ref{thm:new_resultant}. But if no $m_k$ is a pure power of $x_i$, then the vertex $x_i^d$ in $G_{B,d}$ has no incoming edge, as every target of an edge in $G_{B,d}$ is divisible by some $m_k$. In particular, $x_i^d$ and its outgoing $i$-labeled edge are not part of a cycle. 

Note also that in Theorem \ref{thm:new_resultant} and Theorem \ref{thm:resultant} the resultant is considered as a polynomial in the $2n$ variables $a_1, \ldots, a_n, b_1, \ldots, b_n$, making the $a_i$'s and the irreducible factors of the cycle polynomials distinct. If we drop this assumption and substitute some of the $b_i$ for zero, then $\sqrt{p(G_{B,d})}$ may be divisible by some of the $a_i$.  
In particular, it might be the case that $\sqrt{\res(B)} \neq a_1\cdots a_n\sqrt{p(G_{B,d})}$. However, following the proof of Theorem \ref{thm:resultant}, it is straightforward to deduce that the equality $\sqrt{\res(B)} =\sqrt{a_1\cdots a_n p(G_{B,d})}$ holds with respect to this substitution.

\begin{ex}\label{ex:almost_binomial_matrix}
Consider the polynomials $f_1=a_1x_1^2 - b_1x_1x_3, f_2=a_2x_2^2 - b_2x_2x_3$ and $f_3=a_3x_3^2 - b_3x_2x_3$ from Example \ref{ex:begining_example}. In this case we calculated that $\sqrt{p(G_{B,4})}=a_2a_3-b_2b_3$. This can be compared with the resultant which we can compute, using Macaulay2 and the package ``Resultants'' \cite{Resultants}, to be $\res(B)=a_1^4a_2^2a_3^2(a_2a_3-b_2b_3)^2$. The matrix $C_3$ is given by

\[
\left(\begin{smallmatrix}
x^4 & x^3y & x^3z & x^2y^2 & x^2yz & x^2z^2 & xy^3 & xy^2z& xyz^2 & xz^3& y^4& y^3z& y^2z^2& yz^3& z^4 \\ \hline
a_1 & 0 & -b_1 & 0 & 0 & 0 & 0 & 0 & 0 & 0 & 0 & 0 & 0 & 0 & 0 \\
0 & a_1 & 0 & 0 & -b_1 & 0 & 0 & 0 & 0 & 0 & 0 & 0 & 0 & 0 & 0 \\
0 & 0 & a_1 & 0 & 0 & -b_1 & 0 & 0 & 0 & 0 & 0 & 0 & 0 & 0 & 0 \\
0 & 0 & 0 & a_1 & 0 & 0 & 0 & -b_1 & 0 & 0 & 0 & 0 & 0 & 0 & 0 \\
0 & 0 & 0 & 0 & a_1 & 0 & 0 & 0 & -b_1 & 0 & 0 & 0 & 0 & 0 & 0 \\
0 & 0 & 0 & 0 & 0 & a_1 & 0 & 0 & 0 & -b_1 & 0 & 0 & 0 & 0 & 0 \\
0 & 0 & 0 & 0 & 0 & 0 & a_2 & -b_2 & 0 & 0 & 0 & 0 & 0 & 0 & 0 \\
0 & 0 & 0 & 0 & 0 & 0 & 0 & \mathbf{a_2} & -\mathbf{b_2} & 0 & 0 & 0 & 0 & 0 & 0 \\
0 & 0 & 0 & 0 & 0 & 0 & 0 & 0 & 0 & 0 & a_2 & -b_2 & 0 & 0 & 0 \\
0 & 0 & 0 & 0 & 0 & 0 & 0 & 0 & 0 & 0 & 0 & a_2 & -b_2 & 0 & 0 \\
0 & 0 & 0 & 0 & 0 & 0 & 0 & 0 & 0 & 0 & 0 & 0 & \mathbf{a_2} & -\mathbf{b_2} & 0 \\
0 & 0 & 0 & 0 & 0 & 0 & 0 & -\mathbf{b_3} & \mathbf{a_3} & 0 & 0 & 0 & 0 & 0 & 0 \\
0 & 0 & 0 & 0 & 0 & 0 & 0 & 0 & -b_3 & a_3 & 0 & 0 & 0 & 0 & 0 \\
0 & 0 & 0 & 0 & 0 & 0 & 0 & 0 & 0 & 0 & 0 & 0 & -\mathbf{b_3} & \mathbf{a_3} & 0 \\
0 & 0 & 0 & 0 & 0 & 0 & 0 & 0 & 0 & 0 & 0 & 0 & 0 & -b_3 & a_3 
\end{smallmatrix}\right)
\]  

where we have put the monomials labeling the columns on top and where the entries that occur in a cycle are in boldface. The labels use $x,y,z$ instead of $x_1,x_2,x_3$ for readability. Calculating its determinant we find that $|C_3|=a_1^6a_2^3a_3^2(a_2a_3-b_2b_3)^2$, and as all $m_i$ are squarefree in this case, we see that \[\sqrt{\res(B)} = \sqrt{|C_3|} = a_1a_2a_3\sqrt{p(G_{B,4})}= a_1a_2a_3(a_2a_3-b_2b_3). \qedhere \]
\end{ex}

\section{Discussion}

\subsection{Presentations of complete intersections}
It is an easy exercise to show that after a suitable linear change of variables, any complete intersection has a generating set on normal form. However, a binomial complete intersection does not necessarily have a generating set of binomials on normal form.

\begin{ex} \label{ex:notnormal}
The binomials
\[
f_1=x_1x_2+x_3x_4, \ f_2=x_1^3+x_2^3, \ f_3 = x_3^3, \ f_4=x_4^3
\]
form a regular sequence. Any generating set of the ideal $(f_1, f_2, f_3, f_4)$ must contain $f_1$, so we seek a linear change of variables which puts $f_1$ on normal form. However, a change of variables can not make $f_1$ into a binomial on normal form. To see this, recall that any quadratic form can be written as $x^\top S x$ where $S$ is a symmetric $n \times n$ matrix. The rank of the matrix $S$ is invariant under change of variables. In our case, the symmetric matrix representing $f_1$ has rank four. But the rank of the symmetric matrix representing any quadratic binomial on normal form has rank at most three, as the binomial is supported in at most three variables.  
\end{ex}

The argument in Example \ref{ex:notnormal} relies on the fact that the generators have mixed degrees. This raises the question whether all equigenerated binomial complete intersections can be presented by binomials on normal form. We believe that this is not the case, motivated by the next example where we have not been able to find an appropriate linear transform. 

\begin{ex}\label{ex:notnormal_eq}
The binomial complete intersection
\[
I=(x_1^2-x_2^2, \ x_3^2+x_4^2, \ x_1x_2+x_3x_4, \ x_1x_3+x_2x_4)
\]
is not on normal form. Is there a linear change of variables under which $I$
is generated by binomials on normal form? 
\end{ex}

Clearly $\M_{2,3,3,3}$ and $\M_{2,2,2,2}$ are not bases for the corresponding quotient rings of Example \ref{ex:notnormal} and Example \ref{ex:notnormal_eq}. However, this can be achieved in both cases by a generic change of variables,  
which also puts the generators on normal form.

Another class of complete intersections where the set $\M_{d_1, \ldots, d_n}$ is a vector space basis can be found in \cite{Ab}. 
Let $\ell_1, \ldots, \ell_n$ be linear forms such that $x_1\ell_1, \ldots, x_n\ell_n$ is a complete intersection. It is proved in \cite[Lemma 2.2]{Ab} that $\M_{2, \ldots, 2}$ is a basis for the quotient ring. This lemma is used to prove the Eisenbud-Green-Harris conjecture \cite{EGH} for regular sequences of products of linear forms. The conjecture states that if a homogeneous ideal $I$ contains a regular sequence $f_1, \ldots, f_n$ of degrees $d_1, \ldots, d_n$, then there is an ideal containing $x_1^{d_1}, \ldots, x_n^{d_n}$ with the same Hilbert series as $I$. 

We remark that one can not simply drop the \emph{binomial} condition in Theorem \ref{thm:basis}, and instead only require the generators to be on normal form. Indeed, $\M_{d_1, \ldots, d_n}$ fails to be a basis even when just replacing one binomial with a trinomial.

\begin{ex}
The polynomials 
\[f_1=x_1^2+x_1x_2+x_1x_3, \quad f_2=x_2^2+x_1x_2, \quad f_3=x_3^2+x_1x_2\] define a complete intersection, but the squarefree monomials are not a basis for $\C[x_1,x_2,x_3]/(f_1,f_2,f_3)$ as $x_1x_2x_3=x_2f_1-x_1f_2$. However, the squarefree monomials are a basis after a change of variables, for example $(x_1, x_2, x_3) \mapsto (x_1, x_2, x_1+x_3)$. 
\end{ex}

\begin{que}  \label{que:hww}
Harima, Wachi, and Watanabe remarked \cite[Remark 3.5]{HWW} that it is plausible that every quadratic complete intersection has the squarefree monomials as a vector space basis after a suitable change of variables. In spirit of our results, we find it natural to extend this question and we ask if every complete intersection has 
$\M_{d_1, \ldots, d_n}$ as a basis after some change of variables?
\end{que}

It might be tempting to believe that one can pose a more general version of Question \ref{que:hww}: If $A$ is a complete intersection, does any set of monomials of a given degree $d$ and of cardinality $\dim_{\C} A_d$ constitute a basis for $A_d$ after a suitable change of variables? The following example shows that this is not the case in general.

\begin{ex}
Consider the algebra $A=\C[x_1,\ldots,x_6]/I$, where $I$ is a quadratic complete intersection. The Hilbert series of $A$ equals $1+6t+15t^2+20t^3+\cdots$,  so
if we pick $16$ quadratic monomials, they will be linearly dependent. That is, we have a non-trivial equality $c_1 m_1 + \cdots + c_{16} m_{16} = 0$. When we multiply this equality with $x_1$, we get that the monomials $x_1 m_1, \ldots, x_1 m_{16}$ are linearly dependent. This means that we cannot pick any set of $20$ monomials as a vector space basis for the quotient in degree three, as we cannot pick these $16$ monomials to be included in such a basis.
\end{ex}

We would also like to point out that not every complete intersection can be realized as a \emph{binomial} complete intersection, after a linear change of variables. Even though this fact seems highly intuitive, we find it informative to provide a concrete example. 

\begin{ex}
    Let $\ell_i=a_ix+b_iy$, $i=1,2,3$, be three pairwise linearly independent linear forms in $\C[x,y]$. Choose a $2 \times 3$-matrix $C=(c_{ij})$ over $\C$ with rank 2 such that 
    \[
    f=c_{11}\ell_1^d+c_{12}\ell_2^d+c_{13}\ell_3^d, \quad g=c_{21}\ell_1^d+c_{22}\ell_2^d+c_{23}\ell_3^d 
    \]
    is a regular sequence, for a given positive integer $d$. Then $(f,g)$ is not a binomial ideal, in any choice of variables. To prove this, first note that any minimal generating set of $(f,g)$ satisfies the above description, also after applying a linear change of variables. So, we want to check if there is a choice of coefficients such that all but two terms vanishes in both $f$ and $g$ on the above form. Choose two monomials $m_1$ and $m_2$ and let $J_f$ be the ideal of $\C[a_1,a_2,a_3,b_1,b_2,b_3,c_{11}, \ldots, c_{23}]$ generated by the coefficients of the monomials not equal to $m_1$ or $m_2$ in $f$. Similarly we pick two monomials for $g$ and define an ideal $J_g$. For $f$ and $g$ to be binomials $\lambda_1m_1+\lambda_2m_2$ and $\lambda_3m_3+\lambda_4m_4$, there should be a point in the variety $V(J_f) \cap V(J_g)$ satisfying the conditions $\ell_1, \ell_2, \ell_3$ being pairwise linearly independent and $C$ having rank 2.     
    Note that increasing $d$ gives more equations defining $V(J_f)$ and $V(J_g)$, while the number of parameters stay the same.
    For our purpose it turns out that $d=8$ is large enough. Primary decompositions of the radical ideals $\sqrt{J_f}$ and $\sqrt{J_g}$ can be computed in Macaulay2. Going through all primary components, for all choices of monomials of degree 8 for $m_1, \ldots, m_4$ reveals that any point in $V(J_f) \cap V(J_g)$ gives $C$ rank less than 2, or makes a pair of $\ell_1, \ell_2, \ell_3$ linearly dependent. Hence $f$ and $g$ can not both be binomials. 
\end{ex}

\subsection{Lefschetz properties of some Gorenstein algebras with squarefree monomial basis}

As we have seen, our class of binomial complete intersections generalize some of the properties of monomial complete intersections. But artinian monomial complete intersections are known to enjoy the strong Lefschetz property \cite{Stanley, Watanabe}, and this is a result that we have not been able to lift to binomial complete intersections on normal form.

Recall that a standard graded Artinian algebra $A=A_0 \oplus A_1 \oplus \dots \oplus A_s$ has the \emph{weak Lefschetz property (WLP)} if there is a linear form $\ell$ for which the induced multiplication maps $\cdot \ell: A_i \to A_{i+1}$ all have maximal rank. Moreover, $A$ has the \emph{strong Lefschetz property (SLP)} if the maps $\cdot \ell^k: A_i \to A_{i+k}$, have maximal rank for all $i$ and $k$.

A longstanding open problem in the study of the Lefschetz properties for graded artinian algebras is whether every complete intersection has the WLP, and this problem is mentioned as early as 1991 in a paper by Reid, Roberts and Roitman \cite{RRR}. But up to this point not much is known.
In fact, the motivation for Harima, Wachi, and Watanabe \cite{HWW} to study quadratic binomial complete intersections was from the perspective of the Lefschetz properties.

When $A$ is a Gorenstein algebra with dual generator $F \in \C[X_1, \ldots, X_n]$, the Lefschetz properties are determined by Hessian matrices. Let $\{g_1, \ldots, g_r\}$ be a basis for the graded component $A_k$. The $k$-th Hessian matrix of $F$, denoted $\Hess^k F$, is the $r \times r$ matrix where the entry on position $(i,j)$ is $(g_ig_j)\circ F$. For a linear form $\ell=c_1x_1+ \dots +c_nx_n$ the matrix $\Hess^k F$, with $X_i$ substituted by $c_i$ for each $i=1, \ldots, n$, represents the multiplication map $\cdot \ell^{D-2k}:A_k \to A_{D-k}$ w.\,r.\,t.\ a certain basis. Here $D$ denotes the socle degree. The connection between Hessian matrices and Lefschetz properties was originally established in \cite{Watanabe_Hess}. For further reading on this topic we also recommend \cite{GZ}.

\begin{ex}\label{ex:gor_ex1}
Let
\begin{align*}
F=&b_{1}b_{3}X_{1}^{3}X_{3}^{2}+2b_{3}X_{1}X_{2}X_{3}^{3}+b_{2}b_{4}X_{2}^{3}X_{4}^{2}+b_{1}b_{4}X_{1}^{2}X_{4}^{3}+2b_{4}X_{2}X_{3}X_{4}^{3}\\
&\quad +2b_{2}X_{1}X_{2}^{3}X_{5}+2b_{1}X_{1}^{3}X_{4}X_{5}+12X_{1}X_{2}X_{3}X_{4}X_{5}+b_{3}b_{5}X_{3}^{3}X_{5}^{2}\\
&\quad +b_{2}b_{5}X_{2}^{2}X_{5}^{3}+2b_{5}X_{3}X_{4}X_{5}^{3}, \quad \text{and}
\end{align*}
\begin{align*}
&f_1=x_1^2-b_1x_2x_3, \quad f_2=x_2^2-b_2x_3x_4, \quad f_3=x_3^2-b_3x_4x_5, \\ 
&f_4=x_4^2-b_4x_1x_5, \quad f_5=x_5^2-b_5x_1x_2. 
\end{align*}
We have $\res(f_1, \ldots, f_5)=(1-b_1b_2b_3b_4b_5)^{11}$, and $\Ann(F)=(f_1, \ldots, f_5)$ when $b_1, \ldots, b_5$ are chosen so that $\res(f_1, \ldots, f_5) \ne 0$, as explained in Theorem \ref{thm:dual}. 
By Theorem \ref{thm:dual_sqfree_basis} the squarefree monomials span the algebra $A=R/\Ann(F)$ as a vector space, and we shall see that they are in fact a basis for any choice of $b_1, \ldots, b_5$. First, the squarefree monomials of degree four are a basis for $A_4$ as
 \[ \hat{x}_1\circ F=X_1, \quad \hat{x}_2\circ F=X_2, \quad \hat{x}_3\circ F=X_3, \quad \hat{x}_4\circ F=X_4, \quad \hat{x}_5\circ F=X_5,
\]
\[
\text{where} \ \  \hat{x}_i= \frac{x_1x_2x_3x_4x_5}{12x_i},
\]
are linearly independent. Letting the squarefree monomials of degree three act on $F$ gives constant multiples of
\begin{align*}
 &b_1X_1^2+2X_2X_3, \quad b_2X_2^2+2X_3X_4, \quad b_3X_3^2+2X_4X_5, \quad b_4X_4^2+2X_1X_5,\\
  &b_5X_5^2+2X_1X_2, \quad X_1X_3, \quad X_1X_4, \quad X_2X_4, \quad X_2X_5, \quad X_3X_5, 
\end{align*}
which are linearly independent. It now follows by symmetry that the Hilbert series is $1+5t+10t^2+10t^3+5t^4+t^5$, for any $b_1, \ldots, b_5$.

The first and second Hessians $\Hess^1$ and $\Hess^2$ w.\,r.\,t.\ squarefree monomial bases represent the multiplication maps $\cdot \ell^3:A_1 \to A_4$ and $\cdot \ell:A_2 \to A_3$ for a generic linear form $\ell$, independent of the value of the resultant. For both matrices, we compute the radical of the ideal in $\C[b_1, \ldots, b_5]$ generated by the coefficients of the determinant. In both cases the radical ideal is the whole ring, which means that both determinants are nonzero, independently of $b_1, \ldots, b_5$.  In other words, the Gorenstein algebra defined by $F$ has the SLP, for any choice of $b_1, \ldots, b_5$. 
\end{ex}

It can also happen that the Hilbert series changes when the coefficients are chosen so that the resultant of the given binomials vanishes. 

\begin{ex}
Let $A=\C[x_1, \ldots, x_5]/\Ann(F)$ where $F$ is the dual generator corresponding to 
\begin{align*}
&f_1=x_1^2-b_1x_2x_5, \quad f_2=x_2^2-b_2x_1x_3, \quad f_3=x_3^2-b_3x_2x_4, \\ 
&f_4=x_4^2-b_4x_3x_5, \quad f_5=x_5^2-b_5x_1x_4 
\end{align*}
in the sense of Theorem \ref{thm:dual}. Here $\res(f_1, \ldots, f_5)=(1-b_1b_2b_3b_4b_5)^5$. It is proved in the same way as in Example \ref{ex:gor_ex1} that $\dim A_1= \dim A_4=5$. To determine the dimension of $A_2$ and $A_3$ we let the squarefree monomials of degree three act on $F$. The computation is done in Macaulay2, and results in the polynomials
\begin{align*}
p_1=&b_1X_1^2+2b_3b_4X_3X_4+2X_2X_5, \quad &q_1&=b_1^2b_2b_5X_1^2+2X_3X_4+2b_1b_2b_5X_2X_5, \\
p_2=&b_2X_2^2+2X_1X_3+2b_4b_5X_4X_5, \quad &q_2&=b_1b_2^2b_3X_2^2+2b_1b_2b_3X_1X_3+2X_4X_5, \\
p_3=&b_3X_3^2+2X_2X_4+2b_1b_5X_1X_5, \quad &q_3&=b_2b_3^2b_4X_3^2+2b_2b_3b_4X_2X_4+2X_1X_5, \\
p_4=&b_4X_4^2+2b_1b_2X_1X_2+2X_3X_5, \quad &q_4&=b_3b_4^2b_5X_4^2+2X_1X_2+2b_3b_4b_5X_3X_5, \\
p_5=&b_5X_5^2+2b_2b_3X_2X_3+2X_1X_4, \quad &q_5&=b_1b_4b_5^2X_5^2+2X_2X_3+2b_1b_4b_5X_1X_4.
\end{align*}
We see that $p_1, \ldots, p_5$ are linearly independent, and when $1-b_1b_2b_3b_4b_5=0$ we have
\begin{align*}
&p_1-b_3b_4q_1=0, \quad p_2-b_4b_5q_2=0, \quad p_3-b_1b_5q_3=0, \quad p_4-b_1b_2q_4=0, \quad \text{and} \\
&p_5-b_2b_3q_5=0.
\end{align*} 
Hence the Hilbert series of $A$ is $1+5t+5t^2+5t^3+5t^4+t^5$ when $1-b_1b_2b_3b_4b_5=0$. We can moreover verify that $A$ has the SLP, for all choices of $b_1, \ldots, b_5$.  
\end{ex}

\begin{que}
Do all Gorenstein algebras, including non-complete intersections, defined by forms from Theorem \ref{thm:dual} have the SLP?
\end{que}

We note that a Gorenstein algebra having a squarefree monomial basis does not imply the WLP.

\begin{ex}
Let $F=X_1X_3^3X_4+X_2X_3X_4^3+X_2^2X_5^3$ define a Gorenstein algebra $A$. The Hilbert series of $A$ is $1+5t+10t^2+10t^3+5t^4+t^5$, but $A$ is not a complete intersection. The second Hessian $\Hess^2$ of $F$ does not have full rank, and therefore $A$ fails the WLP. The set of squarefree monomials is not a basis for $A$, as for example $(x_1x_2x_3x_4x_5)\circ F=0$. However, the squarefree monomials becomes a basis after applying a linear change of variables to $A$. This can be checked in Macaulay2 for example by taking five random linear forms with rational coefficients as new variables. This example is also discussed in \cite[Example 3.9]{HWW_mdual}.
\end{ex}

\section*{Acknowledgement}
We thank Aldo Conca and Junzo Watanabe for comments on a draft of this paper and Thomas Kahle for discussions on the terminology of our objects of study. We also thank the anonymous referee for helpful suggestions.
The second author was partly supported by the grant VR2022-04009. The third author was supported by the grant KAW-2019.0512.

\bibliographystyle{plain}
\bibliography{bci_ref}

\begin{thebibliography}{10}

\bibitem{Ab}
Abed Abedelfatah.
\newblock {On the Eisenbud-Green-Harris conjecture}.
\newblock {\em Proceedings of the AMS}, 143(1):105 -- 115, 2014.

\bibitem{CA}
Eduardo Cattani and Alicia Dickenstein.
\newblock {Counting solutions to binomial complete intersections}.
\newblock {\em Journal of Complexity}, 23(1):82--107, 2007.

\bibitem{using_alg_geo}
D.A. Cox, J.~Little, and D.~O'Shea.
\newblock {\em Using Algebraic Geometry}.
\newblock Graduate Texts in Mathematics. Springer New York, 2006.

\bibitem{Dickenstein}
Alicia Dickenstein and Ioannis~Z. Emiris.
\newblock {\em {Solving Polynomial Equations}}, volume~14 of {\em Algorithms
  and Computation in Mathematics}.
\newblock Springer Berlin, Heidelberg, 2005.

\bibitem{EGH}
David Eisenbud, Mark Green, and Joe Harris.
\newblock Higher {Castelnuovo} theory.
\newblock In {\em Journ\'ees de g\'eom\'etrie alg\'ebrique d'Orsay - Juillet
  1992}, number 218 in Ast\'erisque. Soci\'et\'e math\'ematique de France,
  1993.

\bibitem{GZ}
Rodrigo Gondim and Giuseppe Zappalà.
\newblock {On mixed Hessians and the Lefschetz properties}.
\newblock {\em Journal of Pure and Applied Algebra}, 223(10):4268--4282, 2019.

\bibitem{M2}
Daniel~R. Grayson and Michael~E. Stillman.
\newblock Macaulay2, a software system for research in algebraic geometry.
\newblock Available at {\tt math.uiuc.edu/Macaulay2}.

\bibitem{HWW_mdual}
Tadahito Harima, Akihito Wachi, and Junzo Watanabe.
\newblock {A Characterization of the Macaulay dual generators for quadratic
  complete intersections}.
\newblock {\em Illinois Journal of Mathematics}, 61(3-4):371--383, 2017.

\bibitem{HWW}
Tadahito Harima, Akihito Wachi, and Junzo Watanabe.
\newblock {The resultants of quadratic binomial complete intersections}.
\newblock {\em Journal of Commutative Algebra}, 12(2):217--235, 2020.

\bibitem{RRR}
Les Reid, Leslie~G. Roberts, and Moshe Roitman.
\newblock {On complete intersections and their Hilbert functions}.
\newblock {\em Canad. Math. Bull}, 34(4):525--535, 1991.

\bibitem{Resultants}
Giovanni Staglian{\`o}.
\newblock {A package for computations with classical resultants}.
\newblock {\em The Journal of Software for Algebra and Geometry}, 8, 2018.

\bibitem{Stanley}
Richard~P. Stanley.
\newblock {Weyl groups, the hard Lefschetz theorem, and the Sperner property}.
\newblock {\em SIAM J. Algebraic Discrete Methods}, 1(2):168--184, 1980.

\bibitem{Watanabe}
Junzo Watanabe.
\newblock {The Dilworth number of Artinian rings and finite posets with rank
  function}.
\newblock In {\em {Commutative algebra and combinatorics}}, volume~11 of {\em
  Advanced Studies in Pure Math.} Kinokuniya Co. North Holland, Amsterdam,
  1987.

\bibitem{Watanabe_Hess}
Junzo Watanabe.
\newblock {A remark on the Hessians of homogeneous polynomials}.
\newblock In {\em The Curves Seminar at Queen’s Volume XIII}, volume 119 of
  {\em Queen’s Papers in Pure and Appl. Math.}, pages 171--178, 2000.

\end{thebibliography}

\end{document}